\newtheorem{defn}{Definition}[section]
\newtheorem{prop}{Proposition}[section]
\newtheorem{thm}{Theorem}[section]
\newtheorem{lem}{Lemma}[section]
\newtheorem{rem}{\bf Remark}[section]
\begin{document}
\title{The variational principle of topological pressure for actions of
sofic groupoids
 \footnotetext {2010 Mathematics Subject Classification: 37D35, 37A35}}
\author{Xiaoyao Zhou ,$^\dag$  Ercai Chen$^{\dag \ddag}$ \\
\small \it $\dag$ School of Mathematical Sciences and Institute of Mathematics, Nanjing Normal University,\\
\small \it Nanjing 210023, P.R.China,
\small \it $\ddag$ Center of Nonlinear Science,\\
\small \it Nanjing University, Nanjing 210093,  P.R.China.\\
\small \it e-mail:
\small \it\noindent zhouxiaoyaodeyouxian@126.com\\
\small \it \noindent ecchen@njnu.edu.cn}
\date{}
\maketitle

\begin{center}
\begin{minipage}{120mm}
{\small {\bf Abstract.} This article establishes the variational
principle of topological pressure for actions of  sofic groupoids. }
\end{minipage}
\end{center}

\vskip0.5cm {\small{\bf Keywords and phrases} Topological pressure;
sofic groupoid; variational principle.}\vskip0.5cm
\section{Introduction}
Variational principles are beautiful results in a dynamical system.
Establishing variational principle is an important topic in
dynamical system theory. The first variational principle that
reveals the relationship between topological entropy and
measure-theoretic entropy was obtained by L. Goodwyn \cite{[Goo2]}
and T. Goodman \cite{[Goo1]}. M. Misiurewicz gave a short proof in
\cite{[Mis]}. R. Bowen \cite{[Bow0]} established the variational
principle of entropy for non-compact set in 1973. Y. Pesin and B.
Pitskel \cite{[PesPit]} studied the variational principle of
pressure for non-compact set.

Romagnoli \cite{[Rom]} introduced two types of measure-theoretic
entropies relative to a finite open cover and proved the variational
principle between local entropy and one type. Later, Glasner and
Weiss \cite{[GlaWei]} proved that if the system is invertible, then
the local variational principle is also true for another type
measure-theoretic entropy.  W. Huang \& Y. Yi \cite{[HuaYi]}
obtained the variational principle of local pressure. W. Huang, X.
Ye and G. Zhang \cite{[HuaYeZha]} established the variational
principle of local entropy for a countable discrete amenable group
action. B. Liang and K. Yan \cite{[LiaYan]} generalized it to local
pressure for sub-additive potentials of amenable group actions.

Recently, L. Bowen \cite{Bow2}, \cite{Bow1} introduced a notion of
entropy for measure-preserving actions of a countable discrete sofic
group on a standard probability space admitting a generating finite
partition. Just after that, D. Kerr and H. Li \cite{[KerLi1]}
 established variational principle of entropy for
sofic group actions. G. Zhang \cite{[Zha]} generalized it to the
variational principle of local entropy for countable infinite sofic
group actions. N. Chung \cite{[Chu]} generalized the result of D.
Kerr and H. Li \cite{[KerLi1]} to the variational principle of
pressure for sofic group actions.

Very recently, L. Bowen \cite{Bow3} generalized the sofic entropy
theory to class-bijective extensions of sofic groupoids, and
established the variational principle between topological entropy
and measure entropy.

This article establishes the variational principle of topological
pressure for actions of  sofic groupoids.

\section{Preliminaries}
The section presents some basic definitions and notations about
groupoid.
\subsection{Discrete groupoid}

A groupoid $\mathcal{H}^1$ is a set consists of morphisms, equipped
with a set of objects $\mathcal{H}^0,$ source and range maps
$\varsigma,\tau:\mathcal{H}^1\to\mathcal{H}^0,$ an injective
inclusion map $i:\mathcal{H}^0\to\mathcal{H}^1,$ such that

(i) for any $x\in \mathcal{H}^0,\varsigma(i(x))=\tau(i(x))=x;$

(ii) let $\mathcal{H}^2=\left\{(f,g)\in\mathcal{H}^1\times
\mathcal{H}^1:\varsigma(f)=\tau(g)\right\}$ and define a composition
map $c:\mathcal{H}^2\to\mathcal{H}^1$ satisfying

(iii) for any
$(f,g)\in\mathcal{H}^2,\varsigma(c(f,g))=\varsigma(g),\tau(c(f,g))=\tau(f);$

(iv)for any $f\in\mathcal{H}^1,$ there exists a unique element
$f^{-1}\in\mathcal{H}^1$ with $c(f^{-1},f)=i(\varsigma(f))$ and
$c(f,f^{-1})=i(\tau(f)).$

For convenience, a remark about the notations of a groupoid to be
used in this article is presented here.

\begin{rem}{\rm Let $\mathcal{H}^1$ be a groupoid defined as above.
\begin{itemize}
  \item
Write $\mathcal{H}$ instead of $\mathcal{H}^1, fg$ instead of
$c(fg)$ for the sake of convenience.

\item Let $[[\mathcal{H}]]$ be the collection of all Borel subsets
$f\subset\mathcal{H},$ whose source and range maps restricted to $f$
are Borel isomorphisms onto their respective images.

\item  Given
$f,g\in[[\mathcal{H}]],$ set $f^{-1}=\{h^{-1}:h\in f\}$ and
$fg=\{h\in\mathcal{H}:h=f'g'{\rm ~for~some~}f'\in f,g'\in g\}.$

\item $\mathcal{H}\in[[\mathcal{H}]]$ and any Borel subset $P$ of
$\mathcal{H}^0$ belongs to $[[\mathcal{H}]]$ (Using the inclusion
map, $\mathcal{H}^0$ can be viewed as a subset of $\mathcal{H}.$).

\item If
$f\in[[\mathcal{H}]]$ and $x\in\varsigma(f),$ then $fx$ is
well-defined, and $fx=\varsigma^{-1}(x)\cap f.$ However,  $fx$ may
not be contained in  $\mathcal{H}^0.$ To avoid this, we let $f\cdot
x=\tau(fx)\in \mathcal{H}^0.$ In a similar way, if $P\subset
\mathcal{H}^0,$ then we let $f\cdot P=\tau(fP).$

\item Let $[\mathcal{H}]\subset[[\mathcal{H}]]$ be the collection of all
Borel subsets $f\subset\mathcal{H},$ whose source and range maps are
each Borel isomorphisms onto $\mathcal{H}^0.$
 Composition makes
$[[\mathcal{H}]]$ an inverse semi-group called the semi-group of
partial automorphisms, $[\mathcal{H}]$ a group called the full group
of $\mathcal{H}$.

\item Let $[\mathcal{H}]_{top}$ be the collection of all closed  sets
$f\subset\mathcal{H}$ such that the restrictions of the source and
range maps to $f$ are homeomorphisms onto $\mathcal{H}^0.$ Under
composition, $[\mathcal{H}]_{top}$  is a subgroup of
$[\mathcal{H}].$

\end{itemize}

}\end{rem}

A measurable groupoid  $\mathcal{H}$ means  a groupoid $\mathcal{H}$
is equipped with the structure of a standard Borel space satisfying
that $\mathcal{H}^0$ is a Borel set and the source, range,
composition and inversion maps are all Borel.

A groupoid $\mathcal{H}$ is discrete means that for any
$x\in\mathcal{H}^0,\varsigma^{-1}(x)$ and $\tau^{-1}(x)$ are
countable.

A discrete probability measure groupoid refers to a discrete
measurable groupoid $\mathcal{H}$ together with a Borel probability
measure $\nu$ over $\mathcal{H}^0$ satisfying that if
$\nu_\varsigma,\nu_\tau$ are the measures over $\mathcal{H}$ defined
by

\begin{equation*}
\nu_{\varsigma}(B)=\int_{\mathcal{H}^0}|\varsigma^{-1}(x)\cap
B|d\nu(x), \nu_{\tau}(B)=\int_{\mathcal{H}^0}|\tau^{-1}(x)\cap
B|d\nu(x),
\end{equation*}
for any Borel set $B\subset \mathcal{H},$ then $\nu_{\varsigma}$ is
equivalent to $\nu_{\tau}.$ In particular, if
$\nu_{\varsigma}=\nu_{\tau},$ then $(\mathcal{H},\nu)$ is pmp
(probability-measure-preserving). The article deals with pmp
groupoids. Hence, we let $\nu$ denote $\nu_{\varsigma}=\nu_{\tau},$
and $\nu$ restricted to $\mathcal{H}^0$ is $\nu.$

A discrete topological groupoid is a discrete groupoid $\mathcal{H}$
paired with a topology such that the structure maps (source, range,
inverse and composition) are continuous. An open subset $f\subset
\mathcal{H}$ is called a bisection means that the restrictions of
the source and range maps to $f$ are homeomorphisms onto their
images which are open subsets of $\mathcal{H}^0.$ $\mathcal{H}$ is
\'{e}tale refers to that if  every  $g\in\mathcal{H}$ is contained
in a bisection.

\begin{rem}{\rm Assume that $(\mathcal{H},\nu)$ is pmp.

\begin{itemize}
\item
Given  a Borel set $A\subset\mathcal{H}^0,$ set $\partial A=A
\cap\overline{\mathcal{H}^0\setminus A}.$

\item Let $\mathcal{B}_{\partial}(\mathcal{H}^0,\nu)$ be the set of
all Borel subset $A\subset\mathcal{H}^0$ satisfying $\nu(\partial
A)=\nu(\partial(\mathcal{H}^0\setminus A))=0.$

\item Let $[[\mathcal{H}]]_{top}$ be the collection of all elements of
$[[\mathcal{H}]]$ with the form $f=\cup_{i=1}^n f_i,$ where

(i) for each $i$ there is a bisection $U_i$ with $f_i\subset U_i,$

(ii)
$\{\varsigma(f_i)\}_{i=1}^n\subset\mathcal{B}_{\partial}(\mathcal{H}^0,\nu)$
are pairwise disjoint,

(iii)
$\{\tau(f_i)\}_{i=1}^n\subset\mathcal{B}_{\partial}(\mathcal{H}^0,\nu)$
are pairwise disjoint.

\item
For $f\in[[\mathcal{H}]],$ the trace of $f$ is given by
$tr_{\mathcal{H}}(f)=\nu(\mathcal{H}^0\cap f).$ Also
$|f|_{\mathcal{H}}=\nu(f).$
\end{itemize}

}\end{rem}

{\bf Example \cite{Bow3}} Let $d\in\mathbb{N}.$ The full groupoid on
$\{1,\cdots,d\}$ is $\Delta_d:=\{1,\cdots,d\}^2$ and the unit space
is $\Delta_d^0:=\{(i,i):1\leq i\leq d\}.$ The structure maps are
given by $\varsigma(i,j)=(j,j),\tau(i,j)=(i,i),(i,j)^{-1}=(j,i)$ and
$(i,j)(j,k)=(i,k).$ Let $\zeta_d(E)=|E|/ d$ for any set $E\subset
\Delta_d. (\Delta_d,\zeta_d)$ is a pmp groupoid. We simplify
$[\Delta_d]$ as $[d]$ that is isomorphic with the symmetric group on
$\{1,\cdots,d\}$ and simplify $[[\Delta_d]]$ as $[[d]]$ that is the
set of all subsets $f\subset\Delta_d$ such that
$\varsigma:f\to\varsigma(f)$ and $\tau:f\to\tau(f)$ are bijections.
Let $tr_d:=tr_{\Delta_d},|\cdot|_d:=|\cdot|_{\Delta_d}$ and
$tr_d(f)=|f\cap\Delta_d^0|/d$ and $|f|_d=|f|/d.$
\subsection{Sofic approximations, actions, spanning and separating sets}
Let $(\mathcal{H},\nu)$ denote a pmp discrete groupoid. Given $d>0,$
let Map$([[\mathcal{H}]],[[d]])$ be the collection of all Borel maps
from $[[\mathcal{H}]]$ to $[[d]].$ For a finite set $F\subset
[[\mathcal{H}]]$ and $\sigma:[[\mathcal{H}]]\to[[d]],$ set

\begin{equation*}
N(\sigma,F)=\left\{\sigma'\in{\rm
Map}([[\mathcal{H}]],[[d]]):\sigma'(f)=\sigma(f),\forall f\in
F\right\}.
\end{equation*}
The Borel structure of Map$([[\mathcal{H}]],[[d]])$ can be viewed as
generated by all such $N(\sigma,F).$

$X\subset_f Y$ means that $X$ is a finite subset of $Y.$ Let
$F\subset_f [[\mathcal{H}]],\delta>0,$ a map
$\sigma:[[\mathcal{H}]]\to [[d]]$ is $(F,\delta)-$multiplicative
means for any $s,t \in F,|\sigma(st)\Delta
\sigma(s)\sigma(t)|_d<\delta$ and $(F,\delta)$-trace-preserving
refers to $|tr_d(\sigma(s))-tr_{\mathcal{H}}(s)|<\delta$ for any
$s\in F.$

\begin{defn}{\rm\cite{Bow3}}(Sofic approximation)
Let $J$ be a direct set. For each $j\in J,$ let $d_j\in\mathbb{N}$
and $\mathbb{P}_j$ be a Borel probability measure on {\rm
Map}$([[\mathcal{H}]],[[d_j]]).$ The family
$\mathbb{P}=\{\mathbb{P}_j\}_{j\in J}$ is a sofic approximation to
$(\mathcal{H},\nu)$ means

(i) for any $F\subset_f[[\mathcal{H}]]$ and $\delta>0,$

\begin{equation*}
\lim\limits_{j\to J}\mathbb{P}(\{\sigma\in{\rm
Map}([[\mathcal{H}]],[[d_j]]):\sigma{\rm~is~}(F,\delta)-{\rm
trace-preserving}\})=1.
\end{equation*}

(ii) for any $F\subset_f[[\mathcal{H}]],\delta>0,$ there exists
$j\in J$ such that $j'\geq j$ implies $\mathbb{P}_{j'}$-almost every
$\sigma$ is $(F,\delta)$-multiplicative.

(iii) $\lim\limits_{j\to J}d_j=+\infty.$
\end{defn}

The groupoid $(\mathcal{H},\nu)$ is sofic means it has a sofic
approximation.

Let $\mathcal{G},\mathcal{H}$ be measurable groupoid. A map
$\pi:\mathcal{G}\to\mathcal{H}$ is a groupoid morphism means that
for every  $(f,g)\in\mathcal{G}^2,\pi(fg)=\pi(f)\pi(g),$ for each
$f\in\mathcal{G},\pi(f)^{-1}=\pi(f^{-1})$ and
$\pi(\mathcal{G}^0)\subset\mathcal{H}^0.$ It is class-bijective if
for every $a\in\mathcal{G}^0,$ the restriction of $\pi$ to
$\varsigma^{-1}(a)$ is a bijection onto $\varsigma^{-1}(\pi(a))$ and
the restriction of $\pi$ to $\tau^{-1}(a)$ is also a bijection onto
$\tau^{-1}(\pi(a)).$ If $\pi$ is also surjective, then $\mathcal{G}$
is a class-bijective extension of $\mathcal{H}$ or, equivalently,
$\mathcal{H}$ is a class-bijective factor of $\mathcal{G}.$ $\pi$ is
pmp (probability-measure-preserving) if $\pi_\star\mu=\nu$ and
$(\mathcal{G},\mu),(\mathcal{H},\nu)$ are pmp groupoid.

If $\pi:(\mathcal{G},\mu)\to(\mathcal{H},\nu)$ is class-bijective
then $\pi^{-1}:[[\mathcal{H}]]\to[[\mathcal{G}]]$ is a
homeomorphism. Assume $\mathcal{G}$ and $\mathcal{H}$ are
topological groupoid, $\pi$ is continuous and pmp then
$\pi^{-1}([[\mathcal{H}]]_{top})\subset[[\mathcal{G}]]_{top}.$ Given
$x\in\mathcal{G}$ and $f\in[[\mathcal{H}]], fx:=\pi^{-1}(f)x.$ If
$x\in\mathcal{G}^0$ then we let $f\cdot x$ be
$\tau(\pi^{-1}(f)x)=\tau(fx).$

\begin{defn}
Given a pseudo-metric space $(Z,\rho)$ and $\epsilon>0,$ a subset
$Y\subset Z$ is $(\rho,\epsilon)$-separated means that for any
$y_1\neq y_2\in Y, \rho(y_1,y_2)>\epsilon.$ Given $X\subset Z,$ let
$N_\epsilon(X,\rho)$ be the maximum cardinality of a
$(\rho,\epsilon)$-separated subset $Y\subset X.$ For $X,Y\subset
Z,Y(\rho,\epsilon)$-spans $X$ means that for each $x\in X,$ there
exists $y\in Y$ with $\rho(x,y)<\epsilon.$ Let $N'_\epsilon(X,\rho)$
be the minimum cardinality of a set $Y\subset Z$ which
$(\rho,\epsilon)$-spans $X.$
\end{defn}

\section{Topological pressure}
Assume $\mathcal{G},\mathcal{H}$ are two discrete separable
topological groupoid such that $\mathcal{G}^0$ and $\mathcal{H}^0$
are compact metrizable  spaces, $\pi:\mathcal{G}\to\mathcal{H}$ is a
continuous class-bijective factor map, $(\mathcal{H},\nu)$ is a pmp
groupoid. Given a sofic approximation
$\mathbb{P}=\{\mathbb{P}_j\}_{j\in J}$ to $(\mathcal{H},\nu),
\varphi\in C(\mathcal{G}^0,\mathbb{R}),$ a bias $\beta$ and $p\in
[1,\infty],$ we define the sofic topological pressure of $\pi$ with
respect to $(\mathbb{P},p,\beta,\varphi),$ via a choice of
generating pseudo-metric.

For an integer $d>0,$ we write $x\in(\mathcal{G}^0)^d$ as
$x=(x_1,\cdots, x_d).$ For $f\in [[\mathcal{H}]],$ we set $f\cdot
x:=\{f\cdot x_1,\cdots,f\cdot x_d\}. f\cdot x_i$ is not defined when
$\pi(x_i)\notin \varsigma(f).$ When this occurs, we set $f\cdot
x_i:=\star,$ where $\star$ is a special symbol. So $f\cdot
x\in(\mathcal{G}^0\cup\{\star\})^d.$

$\Delta_d^0$ is viewed as $\{1,\cdots,d\}$ and given
$\sigma:[[\mathcal{H}]]\to[[d]], f\in[[\mathcal{H}]]$ and
$i\in\{1,\cdots,d\},$ we write $\sigma(f)i\in\{1,\cdots,d\}$ instead
of $\sigma(f)\cdot i.$ With $x$ as above, we define
$x\circ\sigma(f):=(x_{\sigma(f)1},\cdots,x_{\sigma(f)d}).$ If
$i\notin \varsigma(\sigma(f)),$ then $x_{\sigma(f)i}$ is not
well-defined. When it occurs, we let $x_{\sigma(f)i}:=\star.$ Thus
$x\circ \sigma(f)\in (\mathcal{G}^0\cup\{\star\})^d.$

Choose a continuous pseudo-metric $\rho$ on $\mathcal{G}^0. \rho$
can be extended to $\mathcal{G}^0\cup\{\star\}$ by setting
$\rho(\star,\star)=0$ and
$\rho(\star,x)=\max\{\rho(y,z):y,z\in\mathcal{G}^0\}$ for any
$x\in\mathcal{G}^0.$ Two pseudo-metrics on
$(\mathcal{G}^0\cup\{\star\})^d$ are given by

\begin{equation*}
\rho_2(x,x'):=\left(\frac{1}{d}\sum\limits_{i=1}^d\rho(x_i,x'_i)^2\right)^{1/2},
\rho_\infty(x,x'):=\max\limits_{1\leq i\leq d}\rho(x_i,x'_i).
\end{equation*}

\begin{defn}{\rm\cite{Bow3}}
(Approximate partial orbits) Let $C(\mathcal{H}^0)$ be the space of
continuous complex-valued functions on $\mathcal{H}^0.$ For a map
$\sigma:[[\mathcal{H}]]\to[[d]],$ finite sets $F\subset
[[\mathcal{H}]], K\subset C(\mathcal{H}^0)$ and $\delta>0,$ set

\begin{equation*}\begin{split}
&Orb_\nu(\pi,\sigma,F,K,\delta,\rho):=\{(x_1,\cdots,x_d)\in(\mathcal{G}^0)^d:\delta>\rho_2(f\cdot
x,x\circ\sigma(f)),\forall f\in
F,\\&\delta>|d^{-1}\sum\limits_{i=1}^dk(\pi(x_i))-\int k
d\nu|,\forall k\in K\}.
\end{split}\end{equation*}
\end{defn}

\begin{defn}{\rm\cite{Bow3}}
A bias $\beta$ for $J$ is either an element of $\{-,+\}$ or a
nonprincipal ultrafilter on $J.$ Given a function
$\Phi:J\to\mathbb{R},$ if $\beta$ is an ultrafilter then the
ultralimit $\lim\limits_{j\to\beta}\Phi(j)$ is well-defined.
Otherwise, define

\begin{equation*}
\lim\limits_{j\to\beta}\Phi(j):=\left\{
                                 \begin{array}{ll}
                                   \liminf_{j\to J}\Phi(j), & if \beta=-; \\
                                   \limsup_{j\to J}\Phi(j), & if \beta=+.
                                 \end{array}
                               \right.
\end{equation*}
\end{defn}

Let
\begin{equation*}
M_\epsilon(Orb_\nu(\pi,\sigma,F,K,\delta,\rho),\rho_2,\varphi):=\sup\limits_{\mathcal{E}}\left\{\sum\limits_{(x_1,\cdots,x_d)\in\mathcal{E}}
\exp(\sum\limits_{i=1}^d\varphi( x_i))\right\},
\end{equation*}
where $\mathcal{E}$ runs over $(\rho_2,\epsilon)$-separated subsets
of $Orb_\nu(\pi,\sigma,F,K,\delta,\rho).$

If $1\leq p<\infty,$ then we define
\begin{equation*}
\|M_\epsilon(Orb_\nu(\pi,\cdot,F,K,\delta,\rho),\rho_2,\varphi)\|_{p,\mathbb{P}_j}=\left(\int
M_\epsilon(Orb_\nu(\pi,\sigma,F,K,\delta,\rho),\rho_2,\varphi)^pd\mathbb{P}_j(\sigma)\right)^{1/p}.
\end{equation*}
Define topological pressure by separated set as follows:
\begin{equation*}\begin{split}
&P^\beta_{\mathbb{P},p}(\pi,\rho,2,\varphi):=\sup\limits_{\epsilon>0}\inf\limits_{\delta>0}\inf\limits_{F\subset_f[[\mathcal{H}]]_{top}}\inf\limits_{K\subset_f
C(\mathcal{H}^0)}\lim\limits_{j\to\beta}\frac{1}{d_j}\log
\|M_\epsilon(Orb_\nu(\pi,\cdot,F,K,\delta,\rho),\rho_2,\varphi)\|_{p,\mathbb{P}_j},\\&
P^\beta_{\mathbb{P},p}(\pi,\rho,\infty,\varphi):=\sup\limits_{\epsilon>0}\inf\limits_{\delta>0}\inf\limits_{F\subset_f[[\mathcal{H}]]_{top}}\inf\limits_{K\subset_f
C(\mathcal{H}^0)}\lim\limits_{j\to\beta}\frac{1}{d_j}\log
\|M_\epsilon(Orb_\nu(\pi,\cdot,F,K,\delta,\rho),\rho_\infty,\varphi)\|_{p,\mathbb{P}_j}.
\end{split}
\end{equation*}

\begin{rem}
We are suppressing the choice of bias $\beta$ and parameter
$p\in[1,\infty]$ from the notation. Thus,
$P^\beta_{\mathbb{P},p}(\pi,\rho,2,\varphi)$ can be simplified as
$P_{\mathbb{P}}(\pi,\rho,2,\varphi)$ and
$P^\beta_{\mathbb{P},p}(\pi,\rho,\infty,\varphi)$ can be expressed
by $P_{\mathbb{P}}(\pi,\rho,\infty,\varphi).$
\end{rem}

\begin{rem}\label{rem3.2}
The quantity
$\frac{1}{d_j}\log\|M_\epsilon(Orb_\nu(\pi,\cdot,F,K,\delta,\rho),\rho_2,\varphi)\|_{p,\mathbb{P}_j}$
is monotone increasing  in $\delta$ and monotone decreasing in
$\epsilon, F, K.$ This implies that the supremum and the infimums
can be replaced by the appropriate limits.
\end{rem}

\noindent Let
\begin{equation*}
M'_\epsilon(Orb_\nu(\pi,\sigma,F,K,\delta,\rho),\rho_2,\varphi):=\inf\limits_{\mathcal{E}'}\left\{\sum\limits_{(x_1,\cdots,x_d)\in\mathcal{E}'}
\exp(\sum\limits_{i=1}^d\varphi( x_i))\right\},
\end{equation*}
where $\mathcal{E}'$ runs over $(\rho_2,\epsilon)$-spanning subsets
of $Orb_\nu(\pi,\sigma,F,K,\delta,\rho).$

\begin{lem}
Topological pressure defined by spanning set is equal to given by
separated set. More precisely,

\begin{equation*}\begin{split}
&P_{\mathbb{P}}(\pi,\rho,2,\varphi)=\sup\limits_{\epsilon>0}\inf\limits_{\delta>0}\inf\limits_{F\subset_f[[\mathcal{H}]]_{top}}\inf\limits_{K\subset_f
C(\mathcal{H}^0)}\lim\limits_{j\to\beta}\frac{1}{d_j}\log
\|M'_\epsilon(Orb_\nu(\pi,\cdot,F,K,\delta,\rho),\rho_2,\varphi)\|_{p,\mathbb{P}_j},(*)\\&
P_{\mathbb{P}}(\pi,\rho,\infty,\varphi)=\sup\limits_{\epsilon>0}\inf\limits_{\delta>0}\inf\limits_{F\subset_f[[\mathcal{H}]]_{top}}\inf\limits_{K\subset_f
C(\mathcal{H}^0)}\lim\limits_{j\to\beta}\frac{1}{d_j}\log
\|M'_\epsilon(Orb_\nu(\pi,\cdot,F,K,\delta,\rho),\rho_\infty,\varphi)\|_{p,\mathbb{P}_j}(**).
\end{split}\end{equation*}
\end{lem}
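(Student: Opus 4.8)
The plan is to prove $(*)$ and $(**)$ by establishing, for a fixed approximation map $\sigma$, a two-sided comparison between the separated quantity $M_\epsilon$ and the spanning quantity $M'_\epsilon$, and then pushing this comparison successively through the $L^p(\mathbb{P}_j)$-norm, the normalized logarithm $\frac{1}{d_j}\log$, the biased limit $\lim_{j\to\beta}$, the infima over $\delta,F,K$, and finally the supremum over $\epsilon$. I would treat $\rho_2$ and $\rho_\infty$ in parallel, writing $\rho_\star$ for either, since the two cases differ only in one weight-distortion estimate. Throughout, fix $\sigma$ and abbreviate $Orb:=Orb_\nu(\pi,\sigma,F,K,\delta,\rho)$ and $W(x):=\exp(\sum_{i=1}^d\varphi(x_i))$; every orbit point lies in $(\mathcal{G}^0)^d$, so each $\varphi(x_i)$ is defined, and $|\varphi|\le\|\varphi\|_\infty<\infty$ by compactness of $\mathcal{G}^0$. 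Let $\omega$ denote a modulus of continuity for $\varphi$ with respect to $\rho$.

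First I would record the easy direction $M'_\epsilon(Orb,\rho_\star,\varphi)\le M_\epsilon(Orb,\rho_\star,\varphi)$. Choose a $(\rho_\star,\epsilon)$-separated set $\mathcal{E}\subset Orb$ that is maximal with respect to inclusion; maximality forces $\mathcal{E}$ to $(\rho_\star,\epsilon)$-span $Orb$ (the usual strict-versus-nonstrict adjustment in the spanning radius is harmless, being absorbed by the monotonicity in Remark \ref{rem3.2}). Thus $\mathcal{E}$ is an admissible competitor in the infimum defining $M'_\epsilon$, so $M'_\epsilon\le\sum_{x\in\mathcal{E}}W(x)\le M_\epsilon$, the last step because $\mathcal{E}$ is separated and $M_\epsilon$ is a supremum over separated sets. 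This pointwise bound survives the $L^p$-norm, the operations $\frac{1}{d_j}\log$ and $\lim_{j\to\beta}$, and the infima and supremum, giving that the spanning-set pressure is at most the separated-set pressure.

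Next I would prove the reverse comparison $M_\epsilon(Orb,\rho_\star,\varphi)\le e^{\,d\,c_\star(\epsilon)}\,M'_{\epsilon/2}(Orb,\rho_\star,\varphi)$ with an error rate $c_\star(\epsilon)\to0$ as $\epsilon\to0^+$. Fix a $(\rho_\star,\epsilon)$-separated set $\mathcal{E}$ and a $(\rho_\star,\epsilon/2)$-spanning set $\mathcal{E}'$, and define $\phi:\mathcal{E}\to\mathcal{E}'$ by choosing $y=\phi(x)\in\mathcal{E}'$ with $\rho_\star(x,y)<\epsilon/2$; the triangle inequality makes $\phi$ injective, since two points of $\mathcal{E}$ with a common image would be within $\epsilon$ of one another. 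It then remains to bound $|\sum_i\varphi(x_i)-\sum_i\varphi(y_i)|$. For $\rho_\infty$ this is immediate: $\rho_\infty(x,y)<\epsilon/2$ gives $\rho(x_i,y_i)<\epsilon/2$ for every $i$, so the sum is at most $d\,\omega(\epsilon/2)$ and $c_\infty(\epsilon)=\omega(\epsilon/2)$. For $\rho_2$ the coordinates need not be uniformly close, and this is the crux: from $\frac1d\sum_i\rho(x_i,y_i)^2<(\epsilon/2)^2$, I would fix a threshold $\kappa$, use Markov's inequality to bound the proportion of indices with $\rho(x_i,y_i)\ge\kappa$ by $(\epsilon/2\kappa)^2$, estimate $|\varphi(x_i)-\varphi(y_i)|$ by $2\|\varphi\|_\infty$ on those and by $\omega(\kappa)$ on the rest, and take $\kappa=\sqrt{\epsilon}$ to get $c_2(\epsilon)=\omega(\sqrt{\epsilon})+\tfrac12\|\varphi\|_\infty\,\epsilon\to0$. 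Hence $W(x)\le e^{\,d\,c_\star(\epsilon)}W(\phi(x))$ for each $x$, and summing over $\mathcal{E}$, using injectivity of $\phi$ and $W\ge0$, gives $\sum_{x\in\mathcal{E}}W(x)\le e^{\,d\,c_\star(\epsilon)}\sum_{y\in\mathcal{E}'}W(y)$; a supremum over $\mathcal{E}$ and infimum over $\mathcal{E}'$ yield the claimed inequality.

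Finally I would assemble the pieces. The pointwise bound gives $\|M_\epsilon\|_{p,\mathbb{P}_j}\le e^{\,d_j\,c_\star(\epsilon)}\|M'_{\epsilon/2}\|_{p,\mathbb{P}_j}$, so $\frac1{d_j}\log\|M_\epsilon\|_{p,\mathbb{P}_j}\le c_\star(\epsilon)+\frac1{d_j}\log\|M'_{\epsilon/2}\|_{p,\mathbb{P}_j}$, where $c_\star(\epsilon)$ is independent of $j,\delta,F,K$ and therefore passes unchanged through $\lim_{j\to\beta}$ and the infima over $\delta,F,K$ (which are taken for the same orbit on both sides). Invoking Remark \ref{rem3.2} and its evident analogue for $M'_\epsilon$, both sides are monotone in $\epsilon$, so the outer $\sup_\epsilon$ equals $\lim_{\epsilon\to0^+}$; letting $\epsilon\to0$ annihilates $c_\star(\epsilon)$ and identifies the $\epsilon$- and $\epsilon/2$-limits, yielding that the separated-set pressure is at most the spanning-set pressure. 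Together with the easy direction this proves $(*)$ and $(**)$. The one genuinely delicate point is the $\rho_2$ weight-distortion estimate: unlike the entropy case the potential weights do not cancel coordinate by coordinate, and it is precisely the boundedness of $\varphi$ combined with the Markov/averaging split that confines the total distortion to $e^{\,d\,c_2(\epsilon)}$ with $c_2(\epsilon)\to0$.
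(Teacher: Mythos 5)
Your proposal is correct and follows essentially the same route as the paper: the easy direction via a maximal separated set being spanning (modulo the harmless radius adjustment), and the hard direction via an injective map from a separated set into an $\epsilon/2$-spanning set, with the weight distortion under $\rho_2$ controlled by a Markov-type split of coordinates at threshold $\sqrt{\epsilon}$ (bad fraction at most $\epsilon/4$, cost $2\|\varphi\|$ there and a modulus-of-continuity term elsewhere), then passing the per-coordinate error through the norm, logarithm, limits, infima, and the $\epsilon\to 0^+$ supremum. Your write-up is in fact slightly cleaner than the paper's, since you make explicit that the modulus $\omega(\sqrt{\epsilon})\to 0$ (the paper's statement of the uniform-continuity constant $\eta_1$ is imprecise on this point) and you treat $\rho_\infty$ in parallel rather than omitting it.
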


\begin{proof}
The proof of  $(*)$ is presented here. Since $(**)$ is  similar to
$(*),$ the proof is omitted.

Let $\mathcal{E}\subset Orb_\nu(\pi,\sigma,F,K,\delta,\rho)$ be a
 $(\rho_2,\epsilon)$-separated subset with maximal
cardinality, then $\mathcal{E}(\rho_2,2\epsilon)$-spans
$Orb_\nu(\pi,\sigma,F,K,\delta,\rho).$ Therefore,

\begin{equation*}\begin{split}
&M_\epsilon(Orb_\nu(\pi,\sigma,F,K,\delta,\rho),\rho_2,\varphi)\\&\geq
\sum\limits_{(x_1,\cdots,x_d)\in\mathcal{E}}\exp(\sum\limits_{i=1}^d\varphi(x_i))\\&
\geq M'_{2\epsilon}
(Orb_\nu(\pi,\sigma,F,K,\delta,\rho),\rho_2,\varphi).
\end{split}\end{equation*}
This implies that

\begin{equation*}
P_{\mathbb{P}}(\pi,\rho,2,\varphi)\geq\sup\limits_{\epsilon>0}\inf\limits_{\delta>0}\inf\limits_{F\subset_f[[\mathcal{H}]]_{top}}\inf\limits_{K\subset_f
C(\mathcal{H}^0)}\lim\limits_{j\to\beta}\frac{1}{d_j}\log
\|M'_\epsilon(Orb_\nu(\pi,\cdot,F,K,\delta,\rho),\rho_2,\varphi)\|_{p,\mathbb{P}_j}.
\end{equation*}
Next, the proof of the  opposite inequality is presented as follows.
For any $x',y'\in\mathcal{G}^0,$ there exists $\eta_1>0$ such that $
|\varphi( x')-\varphi( y')|<\eta_1,$ whenever
$\rho(x',y')<\sqrt{\epsilon}.$ There exists
$(\rho_2,\epsilon)$-separated subset $\mathcal{E}_1\subset
Orb_\nu(\pi,\sigma,F,K,\delta,\rho)$ such that

\begin{equation*}
M_\epsilon(Orb_\nu(\pi,\sigma,F,K,\delta,\rho),\rho_2,\varphi)\leq
\sum\limits_{(x^1_1,\cdots,x_d^1)\in
\mathcal{E}_1}\exp(\sum\limits_{i=1}^d\varphi( x_i^1))\cdot \exp(1)
\end{equation*}
and there  exists a $(\rho,\epsilon/2)$-spanning subset
$\mathcal{E}_2$ of $Orb_\nu(\pi,\sigma,F,K,\delta,\rho)$ such that

\begin{equation*}
M'_{\epsilon/2}(Orb_\nu(\pi,\sigma,F,K,\delta,\rho),\rho_2,\varphi)\cdot
\exp(1)\geq \sum\limits_{(x^2_1,\cdots,x_d^2)\in
\mathcal{E}_2}\exp(\sum\limits_{i=1}^d\varphi( x_i^2)).
\end{equation*}
Define $\phi:\mathcal{E}_1\to\mathcal{E}_2$ by choosing, for each
$(x_1^1,\cdots,x_d^1)\in\mathcal{E}_1,$ some point
$\phi((x_1^1,\cdots,x_d^1))=(x_1^2,\cdots,x_d^2)$ with
$\rho_2((x_1^1,\cdots,x_d^1),(x_1^2,\cdots,x_d^2))<\epsilon/2.$ Note
that $\phi$ is injective and
$\rho_2((x_1^1,\cdots,x_d^1),(x_1^2,\cdots,x_d^2))<\epsilon/2$
implies that $|\{1\leq i\leq
d:\rho(x_i^1,x_i^2)\geq\sqrt{\epsilon}\}|\leq\lceil\frac{d\epsilon}{4}\rceil.$
Therefore,

\begin{equation*}\begin{split}
&M'_{\epsilon/2}(Orb_\nu(\pi,\sigma,F,K,\delta,\rho),\rho_2,\varphi)\cdot
\exp(1)\\&\geq \sum\limits_{(x^2_1,\cdots,x_d^2)\in
\mathcal{E}_2}\exp(\sum\limits_{i=1}^d\varphi( x_i^2))\\&\geq
\sum\limits_{(x^2_1,\cdots,x_d^2)\in
\phi\mathcal{E}_1}\exp(\sum\limits_{i=1}^d\varphi( x_i^2))\\&=
\sum\limits_{(x^1_1,\cdots,x_d^1)\in
\mathcal{E}_1\atop(x^2_1,\cdots,x_d^2)= \phi((x^1_1,\cdots,x^1_d))
}\exp(\sum\limits_{i=1}^d\varphi( x_i^1)+\sum\limits_{i=1}^d\varphi(
x_i^2)-\sum\limits_{i=1}^d\varphi( x_i^1))\\&\geq
\sum\limits_{(x^1_1,\cdots,x_d^1)\in
\mathcal{E}_1}\exp(\sum\limits_{i=1}^d\varphi(
x_i^1))\exp(-(d-\lceil\frac{d\epsilon}{4}\rceil)\eta_1-2\|\varphi\|\lceil\frac{d\epsilon}{4}\rceil)\\&\geq
M_\epsilon(Orb_\nu(\pi,\sigma,F,K,\delta,\rho),\rho_2,\varphi)\exp(-(d-\lceil\frac{d\epsilon}{4}\rceil)\eta_1-2\|\varphi\|\lceil\frac{d\epsilon}{4}\rceil-1).
\end{split}\end{equation*}
This implies

\begin{equation*}
P_{\mathbb{P}}(\pi,\rho,2,\varphi)\leq\sup\limits_{\epsilon>0}\inf\limits_{\delta>0}\inf\limits_{F\subset_f[[\mathcal{H}]]_{top}}\inf\limits_{K\subset_f
C(\mathcal{H}^0)}\lim\limits_{j\to\beta}\frac{1}{d_j}\log
\|M'_\epsilon(Orb_\nu(\pi,\cdot,F,K,\delta,\rho),\rho_2,\varphi)\|_{p,\mathbb{P}_j}.
\end{equation*}
\end{proof}

\begin{rem}
In the definitions of $
M_\epsilon(Orb_\nu(\pi,\sigma,F,K,\delta,\rho),\rho_2,\varphi)$ it
suffices to take the supremum over those $(\rho_2,\epsilon)$
separated sets with maximal cardinality. Similarly, in the
definitions of $
M'_\epsilon(Orb_\nu(\pi,\sigma,F,K,\delta,\rho),\rho_2,\varphi),$ it
suffices to take the infimum over the $(\rho_2,\epsilon)$ spanning
sets with minimal cardinality.
\end{rem}

\begin{lem}
$P_\mathbb{P}(\pi,\rho,2,\varphi)=P_\mathbb{P}(\pi,\rho,\infty,\varphi).$
\end{lem}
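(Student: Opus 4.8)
The plan is to establish the two inequalities separately. The inequality $P_{\mathbb P}(\pi,\rho,2,\varphi)\le P_{\mathbb P}(\pi,\rho,\infty,\varphi)$ is immediate from the definitions: since $\rho_2(x,x')\le\rho_\infty(x,x')$ pointwise on $(\mathcal G^0\cup\{\star\})^d$, every $(\rho_2,\epsilon)$-separated subset of an orbit set is automatically $(\rho_\infty,\epsilon)$-separated. Hence the supremum defining $M_\epsilon(Orb_\nu(\sigma),\rho_2,\varphi)$ ranges over a subfamily of the one defining $M_\epsilon(Orb_\nu(\sigma),\rho_\infty,\varphi)$, so $M_\epsilon(\cdot,\rho_2,\varphi)\le M_\epsilon(\cdot,\rho_\infty,\varphi)$ for every $\sigma$ (the orbit set itself is the same, as it is defined via $\rho_2$ in both). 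This pointwise bound is preserved by the monotone operations $\|\cdot\|_{p,\mathbb P_j}$, $\tfrac1{d_j}\log$, $\lim_{j\to\beta}$, $\inf_{\delta,F,K}$ and $\sup_\epsilon$, giving the claim. This half uses nothing beyond the definitions.

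The reverse inequality is the substance, and I would argue it by a combinatorial covering estimate, converting the $\rho_2$ side back to the pressure through the spanning formula $(*)$ of Lemma 3.1. Fix $\epsilon>0$ and a scale $\kappa\in(0,\epsilon]$ to be chosen. Using compactness of $\mathcal G^0$, cover it by $R=R(\epsilon)$ balls of $\rho$-radius $\epsilon/4$. For fixed $\sigma$, let $\mathcal E$ be a $(\rho_\infty,\epsilon)$-separated subset of $Orb_\nu(\sigma)$ (abbreviating $Orb_\nu(\pi,\sigma,F,K,\delta,\rho)$) and let $\mathcal S$ be a $(\rho_2,\kappa)$-spanning subset whose weight is within a factor $e$ of $M'_\kappa(Orb_\nu(\sigma),\rho_2,\varphi)$. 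Each $x\in\mathcal E$ lies within $\rho_2$-distance $\kappa$ of some $z\in\mathcal S$; writing $\mathcal E_z$ for the corresponding pieces, $\mathcal E\subset\bigcup_z\mathcal E_z$.

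The key local estimate is a Markov bound identical in spirit to the one already used in Lemma 3.1: if $\rho_2(x,z)<\kappa$ then the ``bad'' set $B(x)=\{i:\rho(x_i,z_i)\ge\epsilon/2\}$ satisfies $|B(x)|\le L:=\lceil 4d\kappa^2/\epsilon^2\rceil$. I then encode each $x\in\mathcal E_z$ by the pair consisting of $B(x)$ together with, for each $i\in B(x)$, the index of an $\epsilon/4$-ball containing $x_i$. A triangle-inequality check shows this encoding is injective on $\mathcal E_z$: two points with the same code agree to within $\rho<\epsilon/2$ on $B(x)$ and to within $\rho<\epsilon$ off $B(x)$, forcing $\rho_\infty<\epsilon$ and contradicting separation. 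Counting codewords gives $|\mathcal E_z|\le R^{L}\sum_{\ell=0}^{L}\binom{d}{\ell}$, which is subexponential in $d$ once $\kappa/\epsilon$ is small enough that $L/d<1/2$. For the weights, uniform continuity of $\varphi$ with respect to $\rho$ (the same property invoked in Lemma 3.1) controls the off-$B$ coordinates, while the at most $L$ bad coordinates cost at most $2\|\varphi\|L$, so $\exp(\sum_i\varphi(x_i))\le\exp(\sum_i\varphi(z_i))\,\exp(d\,\omega_\varphi(\epsilon/2)+2\|\varphi\|L)$, where $\omega_\varphi$ is the modulus of continuity of $\varphi$.

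Summing over $\mathcal E_z$ and then over $z\in\mathcal S$ yields, for every $\sigma$,
\[
M_\epsilon(Orb_\nu(\sigma),\rho_\infty,\varphi)\le e\,R^{L}\Big(\sum_{\ell=0}^{L}\tbinom{d}{\ell}\Big)\,e^{\,d\,\omega_\varphi(\epsilon/2)+2\|\varphi\|L}\,M'_\kappa(Orb_\nu(\sigma),\rho_2,\varphi).
\]
The prefactor is deterministic, so it pulls out of $\|\cdot\|_{p,\mathbb P_j}$; applying $\tfrac1{d_j}\log$, then $\lim_{j\to\beta}$ (using $d_j\to\infty$, so $L/d_j\to 4\kappa^2/\epsilon^2$), then $\inf_{\delta,F,K}$ gives, with $Q_\infty(\epsilon)$ and $Q'_2(\kappa)$ denoting the $\rho_\infty$-separated and $\rho_2$-spanning quantities at the indicated scales,
\[
Q_\infty(\epsilon)\le \tfrac{4\kappa^2}{\epsilon^2}\log R+H\!\big(\tfrac{4\kappa^2}{\epsilon^2}\big)+\tfrac{8\|\varphi\|\kappa^2}{\epsilon^2}+\omega_\varphi(\epsilon/2)+Q'_2(\kappa),
\]
where $H$ is the entropy function with $H(t)\to0$ as $t\to0$. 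Since $Q'_2(\kappa)\le P_{\mathbb P}(\pi,\rho,2,\varphi)$ by $(*)$, I would choose $\kappa=\kappa(\epsilon)$ small enough (depending on $R(\epsilon)$) that the first three terms are $<\epsilon$; letting $\epsilon\to0$ then drives all corrections to $0$ (using $\omega_\varphi(\epsilon/2)\to0$), and by the monotonicity in Remark 3.2, $\sup_\epsilon Q_\infty(\epsilon)=\lim_{\epsilon\to0}Q_\infty(\epsilon)\le P_{\mathbb P}(\pi,\rho,2,\varphi)$, i.e. $P_{\mathbb P}(\pi,\rho,\infty,\varphi)\le P_{\mathbb P}(\pi,\rho,2,\varphi)$. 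Together with the easy half this gives equality. The main obstacle is exactly this reverse direction: one must design the injective encoding so that the codeword count, together with the multiplicative weight distortion, remains subexponential in $d_j$ after dividing by $d_j$, and then verify that the residual constants can be pushed to zero by coupling $\kappa$ to $\epsilon$; the remaining passage through the $L^p(\mathbb P_j)$-norm and the iterated limits is routine bookkeeping.
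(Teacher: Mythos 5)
Your proof is correct, and your easy half ($P_{\mathbb P}(\pi,\rho,2,\varphi)\le P_{\mathbb P}(\pi,\rho,\infty,\varphi)$ via $\rho_2\le\rho_\infty$) is exactly the paper's. For the substantive inequality you take a genuinely different route, and the difference matters. The paper argues spanning-to-spanning with a \emph{single} scale parameter: from a near-optimal $(\rho_2,\epsilon)$-spanning set $\mathcal E$ of the orbit set it builds a $(\rho_\infty,\sqrt{\epsilon})$-spanning family $\mathcal E'$ by substituting, over all sets $\Lambda$ with $|\Lambda|=\eta=\lceil\epsilon d\rceil$ and all maps $\phi:\Lambda\to\Omega$, coordinates taken from a minimal $(\rho,\sqrt{\epsilon})$-spanning set $\Omega$ of $\mathcal G^0$; this gives $M'_{\sqrt{\epsilon}}(\cdot,\rho_\infty,\varphi)\le 2e^{2\|\varphi\|\eta}\binom{d}{\eta}|\Omega|^{\eta}M'_\epsilon(\cdot,\rho_2,\varphi)$, and the paper closes by citing Stirling's formula. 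You instead argue separated-to-spanning with \emph{two decoupled} scales: a $(\rho_\infty,\epsilon)$-separated set is injectively encoded by nearest members of a $(\rho_2,\kappa)$-spanning set together with the bad set $B(x)$ and indices into a cover of $\mathcal G^0$ at scale $\epsilon/4$, whose cardinality $R(\epsilon)$ is frozen once $\epsilon$ is fixed, while the exponent $4\kappa^2/\epsilon^2$ of $R(\epsilon)$ is killed by sending $\kappa\to0$ first. This decoupling is not cosmetic: it makes every error term demonstrably vanish for an \emph{arbitrary} compact metrizable $\mathcal G^0$ (it is the Kerr--Li scheme for sofic group actions). In the paper's one-parameter version the exponential rate contributed by $|\Omega|^{\eta}$ is $\approx\epsilon\log|\Omega(\sqrt{\epsilon})|$; Stirling controls only $\binom{d}{\eta}$, and $\epsilon\log|\Omega(\sqrt{\epsilon})|\to0$ requires $\log N(\mathcal G^0,\rho,t)=o(t^{-2})$ as $t\to0$, which holds in finite-dimensional spaces but can fail for infinite-dimensional compact $\mathcal G^0$ (e.g.\ suitable infinite products of finite sets). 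So your route buys uniformity in the space exactly where the paper's argument is, as written, incomplete; what you give up is only the explicitness of the paper's spanning-set construction. Two trivial repairs to your write-up: in the injectivity check the conclusion is $\rho_\infty(x,x')\le\epsilon$ rather than $<\epsilon$, which still contradicts strict $(\rho_\infty,\epsilon)$-separation; and you should take $\mathcal S\subset(\mathcal G^0)^d$ (any $\star$-coordinate of a spanning point can be replaced by an arbitrary point of $\mathcal G^0$ without increasing distances to the orbit set) so that the weights $\varphi(z_i)$ are defined.
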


\begin{proof}
Since any $(\rho_2,\epsilon)$-separated set is also
$(\rho_\infty,\epsilon)$-separated, we get

\begin{equation*}
M_\epsilon(Orb_\nu(\pi,\sigma,F,K,\delta,\rho),\rho_2,\varphi)\leq
M_\epsilon(Orb_\nu(\pi,\sigma,F,K,\delta,\rho),\rho_\infty,\varphi),
\end{equation*}
for any $\sigma,F,K,\delta.$ This implies that
$P_{\mathbb{P}}(\pi,\rho,2,\varphi)\leq
P_{\mathbb{P}}(\pi,\rho,\infty,\varphi).$

Next, we prove the opposite direction. Let $\frac{1}{10}>\epsilon>0,
\Omega$ be a $(\rho,\sqrt{\epsilon})$-spanning subset of
$\mathcal{G}^0$ with minimum cardinality. Let $\delta>0,
F\subset_f[[\mathcal{H}]],K\subset_f
C(\mathcal{H}^0),\sigma:[[\mathcal{H}]]\to[[d]]$ and
$\mathcal{E}\subset(\mathcal{G}^0)^d$ be a
$(\rho_2,\epsilon)$-spanning set for
$Orb_\nu(\pi,\sigma,F,K,\delta,\rho)$ with

\begin{equation*}
2M'_\epsilon(Orb_\nu(\pi,\sigma,F,K,\delta,\rho),\rho_2,\varphi)\geq\sum\limits_{(y_1,\cdots,y_d)\in\mathcal{E}}\exp(\sum\limits_{i=1}^d\varphi(y_i)).
\end{equation*}
Let $\eta=\lceil\epsilon d\rceil.$ Define
$\mathcal{E}'\subset(\mathcal{G}^0)^d$ as follows:

For $y=(y_1,\cdots,y_d)\in\mathcal{E},$ every set
$\Lambda\subset[d]$ of cardinality $\eta$ and every map
$\phi:\Lambda\to\Omega, y^\phi$ is given by

\begin{equation*}
y_i^\phi=\left\{
           \begin{array}{ll}
             y_i, & i\notin\Lambda; \\
             \phi(i), & i\in\Lambda.
           \end{array}
         \right.
\end{equation*}
Let $\mathcal{E}'$ be the collection of all $y^\phi$ over all such
$y\in\mathcal{E}$ and $\phi:\Lambda\to \Omega.$ Then

\begin{equation*}
\sum\limits_{y^\phi\in\mathcal{E}'}\exp(\sum\limits_{i=1}^d\varphi(x_i))\leq\binom{d}{\eta}|\Omega|^\eta\sum\limits_{y\in\mathcal{E}}\exp(
\sum\limits_{i=1}^d\varphi(y_i)+2\|\varphi\|\eta).
\end{equation*}
We claim that $\mathcal{E}'$ is
$(\rho_\infty,\sqrt{\epsilon})$-spanning for
$Orb_\nu(\pi,\sigma,F,K,\delta,\rho).$ In fact, let $z\in
Orb_\nu(\pi,\sigma,F,K,\delta,\rho).$ Since $\mathcal{E}$ is
$(\rho_2,\epsilon)$-spanning for
$Orb_\nu(\pi,\sigma,F,K,\delta,\rho),$ there exists
$y\in\mathcal{E}$ such that $\rho_2(y,z)\leq\epsilon,$ that is,

\begin{equation*}
\frac{1}{d}\sum\limits_{i=1}^d\rho(y_i,z_i)^2\leq \epsilon^2.
\end{equation*}
So, there exists a set $\Lambda\subset [d]$ such that for
$i\notin\Lambda,\rho(y_i,z_i)\leq\sqrt{\epsilon}$ and
$|\Lambda|=\eta.$ By the definition of $\Omega,$ for every $i\in
\Lambda,$ there exists a point $\phi(i)\in \Omega$ such that
$\rho(\phi(i),z_i)\leq \sqrt{\epsilon}.$ Thus,
$\rho_\infty(y^\phi,z)\leq\sqrt{\epsilon},$ that shows that
$\mathcal{E}'$  is $(\rho_\infty,\sqrt{\epsilon})$-spanning for
$Orb_\nu(\pi,\sigma,F,K,\delta,\rho).$ It follows that

\begin{equation*}\begin{split}
&M_\epsilon'(Orb_\nu(\pi,\sigma,F,K,\delta,\rho),\rho_\infty,\varphi)\\&\leq\sum\limits_{y^\phi\in\mathcal{E}'}
\exp(\sum\limits_{i=1}^d\varphi(y_i^\phi))\\&\leq\binom{d}{\eta}|\Omega|^\eta\sum\limits_{y\in\mathcal{E}}\exp(
\sum\limits_{i=1}^d\varphi(y_i)+2\|\varphi\|\eta)\\&
\leq2\exp(2\|\varphi\|\eta)\binom{d}{\eta}|\Omega|^\eta\cdot
M'_\epsilon(Orb_\nu(\pi,\sigma,F,K,\delta,\rho),\rho_2,\varphi).
\end{split}\end{equation*}
Then the desired result follows from Stirling's approximation
formula.
\end{proof}

\begin{defn}{\rm\cite{Bow3}}
A pseudo-metric $\rho$ on $\mathcal{G}^0$ is dynamically generating
for $\pi:\mathcal{G}\to\mathcal{H}$ means that for each
$x,y\in\mathcal{G}^0$ there exists $f\in [\mathcal{H}]_{top}$ with
$\rho(f\cdot x,f\cdot y)>0.$
\end{defn}

\begin{thm}\label{thm3.1}
Let $\rho, \rho'$ be two dynamically generating continuous
pseudo-metrics on $\mathcal{G}^0.$ Then
$P_\mathbb{P}(\pi,\rho,\varphi)=P_\mathbb{P}(\pi,\rho',\varphi).$
\end{thm}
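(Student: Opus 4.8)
The plan is to prove the two inequalities $P_\mathbb{P}(\pi,\rho,\varphi)\le P_\mathbb{P}(\pi,\rho',\varphi)$ and its reverse separately; since the hypotheses on $\rho$ and $\rho'$ are symmetric it suffices to establish one of them, say $P_\mathbb{P}(\pi,\rho,\varphi)\le P_\mathbb{P}(\pi,\rho',\varphi)$, and then interchange the roles of $\rho$ and $\rho'$. Throughout I work with the separated-set description via the $\rho_2$ (resp. $\rho'_2$) metrics; the spanning/separated equivalence (Lemma 3.1) lets me pass freely between separated and spanning sets, and the identification of the $\rho_2$ and $\rho_\infty$ versions (Lemma 3.2) justifies dropping the index, so no generality is lost. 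I also record that the constraint imposed by $K\subset_f C(\mathcal{H}^0)$ in the definition of $Orb_\nu$ is metric-independent, so I shall carry the same $K$ on both sides and compare only the metric constraints $\rho_2(f\cdot x,x\circ\sigma(f))<\delta$.

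The engine is a compactness lemma: if $\rho'$ is dynamically generating and $\rho$ is any continuous pseudo-metric on $\mathcal{G}^0$, then for every $\epsilon>0$ there are a finite set $F_0\subset[\mathcal{H}]_{top}\subset[[\mathcal{H}]]_{top}$ and $\delta_0>0$ such that $\max_{f\in F_0}\rho'(f\cdot u,f\cdot v)\le\delta_0$ forces $\rho(u,v)\le\epsilon$ for all $u,v\in\mathcal{G}^0$. This follows by covering the compact set $\{(u,v):\rho(u,v)\ge\epsilon\}$: the generating hypothesis yields, at each of its points, a single $f\in[\mathcal{H}]_{top}$ with $\rho'(f\cdot u,f\cdot v)>0$, and the function $(u',v')\mapsto\rho'(f\cdot u',f\cdot v')$ is continuous near $(u,v)$ because $f$ is a full topological bisection and $\pi$ is a continuous class-bijective factor map, so $u'\mapsto f\cdot u'$ is continuous; hence this function stays above a positive constant on a neighbourhood, and compactness reduces the covering to finitely many such $f$. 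Applying this lemma with $\rho,\rho'$ in both orders gives two quantitative comparisons, each invoking the generating property of the pseudo-metric whose dynamical refinement appears.

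Next I transfer these pointwise comparisons to tuples along approximate orbits. For $x\in Orb_\nu$, the constraint $\rho'_2(f\cdot x,x\circ\sigma(f))<\delta'$ says $f\cdot x_i$ is $\rho'$-close to $x_{\sigma(f)i}$ for all $i$ outside an index set of small density; combined with $(F_0,\cdot)$-multiplicativity (to replace $\sigma(g)\sigma(f)$ by $\sigma(gf)$) and $(F_0,\cdot)$-trace-preservation (so that $i\mapsto\sigma(g)i$ is nearly a bijection and carries small index sets to small index sets) this promotes the coordinatewise hypothesis of the compactness lemma off a small exceptional set. Two consequences are extracted for $\sigma$ sufficiently multiplicative and trace-preserving. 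First, orbit containment: given $\rho'$-side data $(F',\delta')$, I choose a finite $F$ (built from $F'$ and $F_0$) and $\delta$ small so that $Orb_\nu(\pi,\sigma,F,K,\delta,\rho)\subseteq Orb_\nu(\pi,\sigma,F'\cup F_0,K,\delta',\rho')\subseteq Orb_\nu(\pi,\sigma,F',K,\delta',\rho')$; bounding $\rho'$ by the dynamical refinement of $\rho$ here uses $\rho$ generating. Second, a separation comparison: there is $\epsilon''>0$ such that any $(\rho_2,\epsilon)$-separated subset of an $(F'\cup F_0)$-good $\rho'$-orbit is $(\rho'_2,\epsilon'')$-separated; bounding $\rho$ by the dynamical refinement of $\rho'$ here uses $\rho'$ generating.

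Finally I assemble the inequality. Because the comparison map is the identity on tuples, the weights $\exp(\sum_i\varphi(x_i))$ are preserved exactly, so the two consequences give $M_\epsilon(Orb_\nu(\pi,\sigma,F,K,\delta,\rho),\rho_2,\varphi)\le M_{\epsilon''}(Orb_\nu(\pi,\sigma,F',K,\delta',\rho'),\rho'_2,\varphi)$ for every $\sigma$ that is sufficiently multiplicative and trace-preserving. By the sofic approximation axioms this holds for $\sigma$ in a set of $\mathbb{P}_j$-measure tending to $1$; on the complement a crude uniform bound on $M_\epsilon$ (finitely many $\epsilon$-balls cover the compact $\mathcal{G}^0$, times $e^{d\|\varphi\|}$) shows the exceptional contribution does not affect the exponential rate. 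Taking $\|\cdot\|_{p,\mathbb{P}_j}$, then $\tfrac1{d_j}\log$, then $\lim_{j\to\beta}$, and then the infimum over the $\rho$-side data against each fixed $(\delta',F',K)$, yields $\inf_{\delta,F,K}\lim_{j\to\beta}\tfrac1{d_j}\log\|M_\epsilon(\cdot,\rho)\|_{p}\le\lim_{j\to\beta}\tfrac1{d_j}\log\|M_{\epsilon''}(\cdot,\rho')\|_{p}$; taking the infimum over $(\delta',F',K)$ and then $\sup_\epsilon$ gives $P_\mathbb{P}(\pi,\rho,\varphi)\le P_\mathbb{P}(\pi,\rho',\varphi)$, and the reverse follows by swapping $\rho$ and $\rho'$. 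I expect the main obstacle to be the bookkeeping of the third paragraph: converting the averaged $\rho_2$/$\rho'_2$ control into the coordinatewise hypothesis the pointwise lemma requires, while tracking the small index sets produced by multiplicativity and trace-preservation and keeping the chain of constants $\epsilon\mapsto\epsilon''$ and $(F',\delta')\mapsto(F,\delta)$ mutually consistent and uniform in $d$ and $\sigma$.
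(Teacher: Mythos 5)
Your route is genuinely different from the paper's, and its skeleton is sound. The paper factors the problem through the dynamical refinement: Lemma \ref{lem3.5} shows $P_\mathbb{P}(\pi,\rho,\varphi)=P_\mathbb{P}(\pi,\rho^\phi,\varphi)$, Lemma \ref{lem3.4} (quoted from Bowen) shows $\rho^\phi$ is an honest continuous metric when $\{\phi_i\}$ is dense in $[\mathcal{H}]_{top}$, and Lemma \ref{lem3.3} identifies the pressures of any two continuous metrics by uniform comparison on the compact space $\mathcal{G}^0$. You instead compare $\rho$ and $\rho'$ directly via a compactness lemma, in the style of Kerr--Li and Chung for sofic group actions: your compactness lemma is correct (the set $\{(u,v):\rho(u,v)\geq\epsilon\}$ is compact and $u\mapsto f\cdot u$ is continuous for $f\in[\mathcal{H}]_{top}$), the orbit-containment and separation-comparison steps are the right two ingredients, and using the identity map on tuples so that the weights $\exp(\sum_i\varphi(x_i))$ match exactly is precisely what makes the argument work for pressure rather than just entropy.

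There is, however, a genuine gap in the final assembly: you invoke trace-preservation and then dismiss the $\sigma$'s that fail it by a ``crude uniform bound.'' That step fails. Axiom (i) of a sofic approximation only guarantees that the $\mathbb{P}_j$-measure of the trace-preserving $\sigma$'s tends to $1$, with no rate and no almost-everywhere statement (contrast axiom (ii), which for large $j$ makes $\mathbb{P}_j$-almost every $\sigma$ multiplicative). Writing $p_j$ for the measure of the exceptional set and $C$ for an $\epsilon$-covering number of $(\mathcal{G}^0,\rho)$, your bound controls the exceptional contribution to $\|M_\epsilon\|_{p,\mathbb{P}_j}$ only by $p_j^{1/p}C^{d_j}e^{d_j\|\varphi\|}$, whose exponential rate is $\log C+\|\varphi\|+\tfrac{\log p_j}{p\,d_j}$; since $\tfrac{\log p_j}{d_j}$ need not tend to $-\infty$ (and for $p=\infty$ there is no damping factor at all), this term can dominate, and you would only conclude $P_\mathbb{P}(\pi,\rho,\varphi)\leq\max\bigl(P_\mathbb{P}(\pi,\rho',\varphi),\log C+\|\varphi\|\bigr)$. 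The repair is that you never need axiom (i): the near-bijectivity you want from $i\mapsto\sigma(f)i$ is automatic. Injectivity on the domain holds because $\sigma(f)\in[[d]]$, and the domain has density close to $1$ by orbit membership itself: for $f\in[\mathcal{H}]_{top}\cap F$ and $x\in Orb_\nu(\pi,\sigma,F,K,\delta,\rho)$ one has $f\cdot x_i\neq\star$ for every $i$, so each $i\notin\varsigma(\sigma(f))$ contributes $\mathrm{diam}(\rho)^2$ to $d\,\rho_2(f\cdot x,x\circ\sigma(f))^2<d\delta^2$, forcing $|\{1,\dots,d\}\setminus\varsigma(\sigma(f))|<\delta^2d/\mathrm{diam}(\rho)^2$; and if the orbit set is empty the desired inequality is trivial. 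With this substitute, your pointwise inequality holds for every sufficiently multiplicative $\sigma$, hence $\mathbb{P}_j$-almost every $\sigma$ for large $j$, a null set contributes nothing to $\|\cdot\|_{p,\mathbb{P}_j}$, and the proof closes. This is exactly what the paper's remark after Theorem \ref{thm3.1} is signalling when it says the proof uses only properties (ii) and (iii) of the sofic approximation.
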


\begin{rem}
The proof of Theorem \ref{thm3.1} uses only properties (ii) and
(iii) of the definition of sofic approximation.
\end{rem}

\begin{defn}
Given Theorem \ref{thm3.1}, the sofic topological pressure of $\pi$
with respect to $(\mathbb{P},p,\beta,\varphi)$ is given by
$P_\mathbb{P}(\pi,\varphi):=P_\mathbb{P}(\pi,\rho,\varphi)=P^\beta_{\mathbb{P},p}(\pi,\varphi),$
where $\rho$ is any dynamically generating continuous pseudo-metric
on $\mathcal{G}^0.$
\end{defn}
It is worth mentioning that this is the relative pressure with
respect to the measure $\nu.$ Because the sofic approximation
$\mathbb{P}$ determines $\nu,\nu$ is implicitly referenced in the
notation.

\begin{lem}\label{lem3.3}
If $\rho,\rho'$ are two continuous metrics on $\mathcal{G}^0,$ then
$P_\mathbb{P}(\pi,\rho,\varphi)=P_\mathbb{P}(\pi,\rho',\varphi).$
\end{lem}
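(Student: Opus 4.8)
The plan is to deduce Lemma~\ref{lem3.3} directly from Theorem~\ref{thm3.1}. Theorem~\ref{thm3.1} already asserts that $P_\mathbb{P}(\pi,\rho,\varphi)$ is independent of the choice of \emph{dynamically generating} continuous pseudo-metric on $\mathcal{G}^0$. Since a metric is in particular a continuous pseudo-metric, the entire content of the lemma reduces to the single observation that every continuous metric on $\mathcal{G}^0$ is automatically dynamically generating; once this is established, both $\rho$ and $\rho'$ satisfy the hypotheses of Theorem~\ref{thm3.1}, and the claimed equality is immediate.

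To verify that a continuous metric $\rho$ is dynamically generating, I would exhibit, for each pair $x\neq y$ in $\mathcal{G}^0$, an element $f\in[\mathcal{H}]_{top}$ with $\rho(f\cdot x,f\cdot y)>0$. The natural candidate is the identity of the group $[\mathcal{H}]_{top}$, namely the unit space $\mathcal{H}^0$ itself, viewed inside $\mathcal{H}$ via the inclusion map. Since $[\mathcal{H}]_{top}$ is a subgroup of $[\mathcal{H}]$ and $\mathcal{H}^0$ has source and range maps restricting to the identity homeomorphism of $\mathcal{H}^0$, it lies in $[\mathcal{H}]_{top}$ and is its identity element.

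The key computation is then that $\mathcal{H}^0$ acts trivially, i.e. $\mathcal{H}^0\cdot x=x$ for every $x\in\mathcal{G}^0$. Unwinding the definition $f\cdot x=\tau(\pi^{-1}(f)x)$ with $f=\mathcal{H}^0$, one uses that the semigroup isomorphism $\pi^{-1}$ carries the identity $\mathcal{H}^0$ of $[[\mathcal{H}]]$ to the identity $\mathcal{G}^0$ of $[[\mathcal{G}]]$, so that $\pi^{-1}(\mathcal{H}^0)x=\mathcal{G}^0 x=\varsigma^{-1}(x)\cap\mathcal{G}^0=\{i(x)\}$, whence $\tau(\pi^{-1}(\mathcal{H}^0)x)=\tau(i(x))=x$. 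With $\mathcal{H}^0\cdot x=x$ and $\mathcal{H}^0\cdot y=y$ in hand, the metric property gives $\rho(\mathcal{H}^0\cdot x,\mathcal{H}^0\cdot y)=\rho(x,y)>0$ whenever $x\neq y$, so $\rho$, and likewise $\rho'$, is dynamically generating.

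The only real obstacle is this bookkeeping around the identity element: confirming that $\mathcal{H}^0$ genuinely belongs to $[\mathcal{H}]_{top}$ (closedness of the compact unit space in a Hausdorff $\mathcal{H}$ together with the homeomorphism condition) and that the induced action of this identity on $\mathcal{G}^0$ through the class-bijective $\pi$ is indeed trivial. Everything beyond that point is a one-line appeal to Theorem~\ref{thm3.1}.
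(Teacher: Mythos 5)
Your argument is circular within the logical structure of the paper. In the paper, Lemma~\ref{lem3.3} is not a consequence of Theorem~\ref{thm3.1}; it is one of the three ingredients used to \emph{prove} Theorem~\ref{thm3.1}. The paper's proof of that theorem reads: take a dense sequence $\phi=\{\phi_i\}\subset[\mathcal{H}]_{top}$ with $\phi_1=\mathcal{H}^0$ and chain the equalities
$P_\mathbb{P}(\pi,\rho,\varphi)=P_\mathbb{P}(\pi,\rho^\phi,\varphi)=P_\mathbb{P}(\pi,\rho'^{\phi},\varphi)=P_\mathbb{P}(\pi,\rho',\varphi)$,
where the outer equalities come from Lemma~\ref{lem3.5}, and the middle equality is precisely Lemma~\ref{lem3.3} applied to the two continuous metrics $\rho^\phi,\rho'^\phi$ (which are metrics by Lemma~\ref{lem3.4}). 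So deducing Lemma~\ref{lem3.3} from Theorem~\ref{thm3.1} assumes what is to be proved. The theorem's statement happens to appear before the lemma in the text, but its proof is deferred until after Lemmas~\ref{lem3.3}--\ref{lem3.5}, and it depends on them.

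What the paper actually does for Lemma~\ref{lem3.3} is a direct comparison argument that your proposal does not touch: using compactness of $\mathcal{G}^0$ and uniform continuity, one chooses $\delta_0,\epsilon_n$ so that $\rho'$-closeness forces $\rho$-closeness and $\rho'$-separation forces $\rho$-separation, obtains the inclusion of orbit sets
$Orb_\nu(\pi,\sigma,F,K,\delta_0,\rho')\subset Orb_\nu(\pi,\sigma,F,K,\delta(\Omega^2+1)^{1/2},\rho)$
(citing Lemma 6.9 of \cite{Bow3}), and then compares the quantities $M_{\epsilon_n}(\cdot,\rho'_\infty,\varphi)\leq M_{1/n}(\cdot,\rho_\infty,\varphi)$ before passing to the appropriate limits. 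Your intermediate observation—that every continuous metric is dynamically generating because the identity element $\mathcal{H}^0\in[\mathcal{H}]_{top}$ acts trivially and a metric separates points—is correct and indeed explains why no generating hypothesis appears in the lemma, but it cannot rescue the argument: to make your reduction legitimate you would need an independent proof of Theorem~\ref{thm3.1} (or at least of the metric-to-metric comparison), which is exactly the content of the lemma you are asked to prove.
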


\begin{proof}
Because $\rho$ and $\rho'$ are continuous metrics on
$\mathcal{G}^0,$ and $\mathcal{G}^0$ is compact, for every
$\delta>0$ and sufficiently large integer $n>>0,$ there exist
$\delta_0,\epsilon_n>0$ such that
\begin{itemize}
  \item $\rho'(x,y)\leq\sqrt{\delta_0}\Rightarrow\rho(x,y)\leq\delta,$
  \item $\delta_0\leq\delta^2;$
  \item $\rho'(x,y)\geq\epsilon_n\Rightarrow\rho(x,y)\geq\frac{1}{n};$
  \item $\lim\limits_{n\to\infty}\epsilon_n=0.$
\end{itemize}
Let $\Omega=\max\{\rho(x,y):x,y\in\mathcal{G}^0\}$ be the diameter
of $\rho.$ We claim that for any
$\sigma:[[\mathcal{H}]]\to[[d]],F\subset_f[[\mathcal{H}]]_{top}$ and
$K\subset_f C(\mathcal{H}^0),$

\begin{equation*}
Orb_\nu(\pi,\sigma,F,K,\delta_0,\rho')\subset
Orb_\nu(\pi,\sigma,F,K,\delta(\Omega^2+1)^{1/2},\rho),
\end{equation*}
the proof of which is presented in Lemma 6.9 of \cite{Bow3}. By
choice of $\epsilon_n,$

\begin{equation*}
M_{\epsilon_n}(Orb_\nu(\pi,\sigma,F,K,\delta_0,\rho'),\rho'_\infty,\varphi)\leq
M_{\frac{1}{n}}(Orb_\nu(\pi,\sigma,F,K,\delta(\Omega^2+1)^{1/2},\rho),\rho_\infty,\varphi).
\end{equation*}
Thus,

\begin{equation*}\begin{split}
&\lim_{j\to\beta}\frac{1}{d_j}\log\|M_{\epsilon_n}(Orb_\nu(\pi,\cdot,F,K,\delta_0,\rho'),\rho'_\infty,\varphi)\|_{p,\mathbb{P}_j}\\&
\leq\lim_{j\to\beta}\frac{1}{d_j}\log\|M_{\frac{1}{n}}(Orb_\nu(\pi,\cdot,F,K,\delta(\Omega^2+1)^{1/2},\rho),\rho_\infty,\varphi)\|_{p,\mathbb{P}_j}.
\end{split}\end{equation*}
Taking  the infimum over $\delta_0>0$ then over all $\delta>0,$ then
over all $F\subset_f[[\mathcal{H}]]_{top}$ and $K\subset_f
C(\mathcal{H}^0),$ then the supremum over all $n,$ we have
$P_\mathbb{P}(\pi,\rho',\varphi)\leq
P_\mathbb{P}(\pi,\rho,\varphi).$ The lemma is implied by the
arbitrariness of $\rho$ and $\rho'.$
\end{proof}

For a pseudo-metric $\rho$ on $\mathcal{G}^0,$ and a sequence
$\{\phi_i\}_{i=1}^\infty$ with $\phi_i\in[[\mathcal{H}]]_{top},$ a
pseudo-metric $\rho^\phi$ on $\mathcal{G}^0$ is given by

\begin{equation*}
\rho^\phi(x,y)=\left(\sum\limits_{i=1}^\infty2^{-i}\rho(\phi_i\cdot
x,\phi_i\cdot y)^2\right)^{1/2}.
\end{equation*}
If $m$ is a continuous metric on $\mathcal{G}^0,$ then a metric
$\overline{m}$ on $[\mathcal{H}]_{top}$ can be given by
$\overline{m}(f,g)=\sup\limits_{x\in\mathcal{H}^0}m(f\cdot x,g\cdot
x).$ And the topology of $[\mathcal{H}]_{top}$ is independent of the
choice of metric $m.$

\begin{lem}\label{lem3.4}{\rm\cite{Bow3}}
Considering the homeomorphism group of $\mathcal{G}^0,
Homeo(\mathcal{G}^0),$ with the topology of pointwise convergence,
$[\mathcal{H}]_{top}$ is equipped with the topology such that it
inherits as a subgroup of $Homeo(\mathcal{G}^0).$ if $\rho$ is a
continuous dynamically generating pseudo-metric and
$\{\phi_i\}_{i=1}^\infty$ is a dense subset of
$[\mathcal{H}]_{top},$ then $\rho^\phi$ is a continuous metric.
\end{lem}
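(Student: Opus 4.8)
The plan is to verify the four metric axioms for $\rho^\phi$, isolating the single genuinely dynamical point. Write $\rho_i(x,y):=\rho(\phi_i\cdot x,\phi_i\cdot y)$. Since each $\phi_i\in[\mathcal{H}]_{top}$ acts as a homeomorphism of $\mathcal{G}^0$ (so $\phi_i\cdot x$ is defined for every $x\in\mathcal{G}^0$), each $\rho_i$ is a continuous pseudo-metric bounded by $\Omega:=\max\{\rho(u,v):u,v\in\mathcal{G}^0\}$, which is finite by compactness of $\mathcal{G}^0$ and continuity of $\rho$. Symmetry and non-negativity of $\rho^\phi$ are immediate from those of each $\rho_i$, and the triangle inequality follows from Minkowski's inequality applied to the weighted $\ell^2$-combination $\rho^\phi(x,y)=\bigl(\sum_i 2^{-i}\rho_i(x,y)^2\bigr)^{1/2}$ together with the pointwise triangle inequality of each $\rho_i$. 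Thus the two substantive claims are continuity of $\rho^\phi$ and positive-definiteness.

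For continuity I would show the defining series converges uniformly. Each summand $2^{-i}\rho_i(x,y)^2$ is a continuous function of $(x,y)\in\mathcal{G}^0\times\mathcal{G}^0$ (a composition of the continuous action with the continuous $\rho$) and is dominated by $2^{-i}\Omega^2$, with $\sum_i 2^{-i}\Omega^2=\Omega^2<\infty$. By the Weierstrass $M$-test the partial sums converge uniformly, so $\sum_i 2^{-i}\rho_i(\cdot,\cdot)^2$ is continuous; composing with the continuous square root gives continuity of $\rho^\phi$.

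The hard part will be positive-definiteness, where dynamical generation of $\rho$ and density of $\{\phi_i\}$ must be combined through the topology of pointwise convergence on $[\mathcal{H}]_{top}$. Since $\rho^\phi(x,x)=0$ trivially, it remains to prove $\rho^\phi(x,y)=0\Rightarrow x=y$. Because all terms are non-negative, $\rho^\phi(x,y)=0$ forces $\rho_i(x,y)=0$ for every $i$. Suppose toward a contradiction that $x\neq y$. By dynamical generation there is $f\in[\mathcal{H}]_{top}$ with $c:=\rho(f\cdot x,f\cdot y)>0$. Fixing a compatible metric $d$ on $\mathcal{G}^0$, the sets $\{g\in[\mathcal{H}]_{top}:d(g\cdot x,f\cdot x)<1/k,\ d(g\cdot y,f\cdot y)<1/k\}$ are neighborhoods of $f$ in the topology of pointwise convergence, so density of $\{\phi_i\}$ lets me choose indices $i_k$ with $\phi_{i_k}\cdot x\to f\cdot x$ and $\phi_{i_k}\cdot y\to f\cdot y$. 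Continuity of $\rho$ then gives $\rho_{i_k}(x,y)\to c>0$, so $\rho_{i_k}(x,y)>0$ for some $k$, contradicting $\rho_i(x,y)=0$ for all $i$. Hence $x=y$, and $\rho^\phi$ is a metric.

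The only delicate point is the extraction in the last step: although the pointwise-convergence topology on the (possibly uncountable) $\mathcal{G}^0$ is not metrizable, I approximate $f$ only at the two fixed points $x$ and $y$, so the displayed neighborhoods form a countable decreasing basis and a genuine sequence $\phi_{i_k}$ can be pulled from the dense family. Everything else reduces to continuity of the action in the $\mathcal{G}^0$-variable and of $\rho$, both of which are built into the standing hypotheses and the identification of $[\mathcal{H}]_{top}$ as a subgroup of $Homeo(\mathcal{G}^0)$.
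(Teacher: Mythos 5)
Your proof is correct. There is, however, nothing in the paper to compare it against: the lemma is stated without proof and attributed to Bowen's preprint \cite{Bow3}, so the paper simply outsources the argument. Your route is the expected one — Minkowski's inequality for the triangle inequality, the Weierstrass $M$-test plus continuity of the action for continuity of $\rho^\phi$, and density in the pointwise-convergence topology combined with dynamical generation for positive-definiteness — and the one point you flag as delicate is even simpler than you make it: you do not need a convergent sequence $\phi_{i_k}$ at all, since by continuity of $\rho$ at $(f\cdot x,f\cdot y)$ there is a single pointwise neighborhood of $f$ on which $\rho(g\cdot x,g\cdot y)>c/2$, and density supplies one $\phi_i$ in it, which already contradicts $\rho(\phi_i\cdot x,\phi_i\cdot y)=0$.
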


\begin{lem}\label{lem3.5}
If $\rho$ is a continuous dynamically generating pseudo-metric and
$\{\phi_i\}_{i=1}^\infty\subset[\mathcal{H}]_{top}$ with
$\phi_1=\mathcal{H}^0,$ then

\begin{equation*}
P_\mathbb{P}(\pi,\rho,\varphi)=P_\mathbb{P}(\pi,\rho^\phi,\varphi).
\end{equation*}
\end{lem}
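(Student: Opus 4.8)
The plan is to show that, on the relevant approximate orbit spaces, the pseudo-metrics $\rho_2$ and $\rho^\phi_2$ are uniformly comparable up to additive errors that can be driven to zero, so that the two defining $\sup_\epsilon\inf_\delta\inf_F\inf_K$ expressions coincide. Throughout I write $D=\max\{\rho(y,z):y,z\in\mathcal{G}^0\}$ for the diameter and truncate $\rho^\phi$ by setting $\rho^{\phi,N}(x,y)^2=\sum_{i=1}^N2^{-i}\rho(\phi_i\cdot x,\phi_i\cdot y)^2$; since $\sum_{i>N}2^{-i}\rho(\phi_i\cdot x,\phi_i\cdot y)^2\le 2^{-N}D^2$ uniformly, one has $0\le \rho^\phi_2(x,x')^2-\rho^{\phi,N}_2(x,x')^2\le 2^{-N}D^2$ on $(\mathcal{G}^0)^d$, so it suffices to work with the finite truncations and let $N\to\infty$ at the end. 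Because $\phi_1=\mathcal{H}^0$ acts as the identity, the first summand gives the pointwise bound $\rho^\phi(x,y)\ge 2^{-1/2}\rho(x,y)$, hence $\rho^\phi_2\ge 2^{-1/2}\rho_2$ everywhere. I expect to use only approximate multiplicativity of $\sigma$ (property (ii)) and $d_j\to\infty$ (property (iii)), matching the remark after Theorem \ref{thm3.1}.

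For the inequality $P_\mathbb{P}(\pi,\rho^\phi,\varphi)\le P_\mathbb{P}(\pi,\rho,\varphi)$ I would fix $\epsilon>0$ and choose $N$ large, $\delta$ small and $F\supseteq\{\phi_1,\dots,\phi_N\}$. The lower bound $\rho^\phi_2\ge 2^{-1/2}\rho_2$ immediately gives the orbit containment $Orb_\nu(\pi,\sigma,F,K,\delta,\rho^\phi)\subseteq Orb_\nu(\pi,\sigma,F,K,\sqrt2\delta,\rho)$. The key elementary estimate is that for $x,x'$ lying in $Orb_\nu(\pi,\sigma,F,K,\sqrt2\delta,\rho)$ and $\phi_i\in F$, the triangle inequality together with the defining condition $\rho_2(\phi_i\cdot x,x\circ\sigma(\phi_i))<\sqrt2\delta$ and the contraction $\rho_2(x\circ\sigma(\phi_i),x'\circ\sigma(\phi_i))\le\rho_2(x,x')$ (valid because $\sigma(\phi_i)\in[[d]]$ permutes the coordinates injectively and $\rho(\star,\star)=0$) yield $\rho_2(\phi_i\cdot x,\phi_i\cdot x')\le\rho_2(x,x')+2\sqrt2\delta$; summing the $2^{-i}$-weighted squares over $i\le N$ and adding the truncation tail gives $\rho^\phi_2(x,x')\le\rho_2(x,x')+\gamma$ with $\gamma=2\sqrt2\delta+2^{-N/2}D$. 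Consequently a $(\rho^\phi_2,\epsilon)$-separated subset of the $\rho^\phi$-orbit is a $(\rho_2,\epsilon-\gamma)$-separated subset of the larger $\rho$-orbit, whence $M_\epsilon(Orb_\nu(\pi,\sigma,F,K,\delta,\rho^\phi),\rho^\phi_2,\varphi)\le M_{\epsilon-\gamma}(Orb_\nu(\pi,\sigma,F,K,\sqrt2\delta,\rho),\rho_2,\varphi)$. Applying $\|\cdot\|_{p,\mathbb{P}_j}$, $\tfrac1{d_j}\log$ and $\lim_{j\to\beta}$ (all monotone, by Remark \ref{rem3.2}), then letting $\delta\to0$ and $N\to\infty$ so that $\gamma\to0$, and using that $\sup_{\epsilon>0}$ absorbs the vanishing shift in the separation parameter, yields this inequality.

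For the reverse inequality $P_\mathbb{P}(\pi,\rho,\varphi)\le P_\mathbb{P}(\pi,\rho^\phi,\varphi)$ the roles are swapped: here the separated-set transfer is free from the lower bound, since a $(\rho_2,\epsilon)$-separated set is automatically $(\rho^\phi_2,2^{-1/2}\epsilon)$-separated, and the work is in the orbit containment $Orb_\nu(\pi,\sigma,F',K,\delta',\rho)\subseteq Orb_\nu(\pi,\sigma,F,K,\delta,\rho^\phi)$. To verify it I would bound, for $f\in F$ and $i\le N$, the quantity $\rho_2(\phi_i\cdot(f\cdot x),\phi_i\cdot(x\circ\sigma(f)))$ by writing $\phi_i\cdot(f\cdot x)=(\phi_i f)\cdot x$ and using the chain $\phi_i\cdot(x\circ\sigma(f))\approx x\circ(\sigma(\phi_i)\sigma(f))\approx x\circ\sigma(\phi_i f)\approx(\phi_i f)\cdot x$, where the first step uses the orbit condition for $\phi_i$, the middle step uses $(F',\delta')$-multiplicativity $\sigma(\phi_i f)\approx\sigma(\phi_i)\sigma(f)$, and the last uses the orbit condition for $\phi_i f$; this forces $F'\supseteq F\cup\{\phi_i\}_{i\le N}\cup\{\phi_i f:i\le N,\ f\in F\}$. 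Summing over $i\le N$ and adding the tail $2^{-N}D^2$ makes $\rho^\phi_2(f\cdot x,x\circ\sigma(f))<\delta$ once $\delta'$ is small and $N$ large, which is exactly the claimed containment (the $K$-conditions are unchanged since they do not involve the metric, and $\delta'\le\delta$). One then obtains $M_\epsilon(Orb_\nu(\pi,\sigma,F',K,\delta',\rho),\rho_2,\varphi)\le M_{2^{-1/2}\epsilon}(Orb_\nu(\pi,\sigma,F,K,\delta,\rho^\phi),\rho^\phi_2,\varphi)$ and concludes as before by taking norms, $\lim_{j\to\beta}$, the infima over parameters, and $\sup_\epsilon$.

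The main obstacle is this last orbit containment: one must commute each $\phi_i$ past the combinatorial $\sigma$-action, and the only tool for $\phi_i\cdot(x\circ\sigma(f))\approx x\circ\sigma(\phi_i f)$ is the approximate multiplicativity of $\sigma$, so the exceptional coordinate set, where $\sigma(\phi_i f)$ and $\sigma(\phi_i)\sigma(f)$ disagree or where a $\star$ is produced, must be controlled and shown to give a vanishing $\rho_2$-contribution. The elementary comparison in the first direction, by contrast, needs only the orbit conditions and the partial-bijection contraction, and uses neither multiplicativity nor the trace-preservation property.
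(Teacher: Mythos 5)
Your proposal is correct, and its geometric core coincides with the paper's: the direction $P_\mathbb{P}(\pi,\rho^\phi,\varphi)\le P_\mathbb{P}(\pi,\rho,\varphi)$ rests on the estimate $\rho_2(\phi_i\cdot x,\phi_i\cdot x')\le\rho_2(x,x')+2\sqrt{2}\delta$ (triangle inequality, the orbit conditions for $\phi_i\in F$, and contraction of $\rho_2$ under composition with the partial bijection $\sigma(\phi_i)$) plus the $2^{-N}$ tail bound, while the reverse direction rests on the orbit containment $Orb_\nu(\pi,\sigma,F',K,\delta',\rho)\subset Orb_\nu(\pi,\sigma,F,K,\delta,\rho^\phi)$ obtained from approximate multiplicativity. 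Where you genuinely depart from the paper is in the bookkeeping. The paper runs the first direction through the spanning quantities $M'_\epsilon$; since minimal spanning sets need not lie inside the orbit set, it must first prove an auxiliary Claim producing a spanning set contained in $Orb_\nu(\pi,\sigma,F_n,K,2\delta,\rho)$ at the price of a factor $\exp(2\|\varphi\|\lceil\epsilon d\rceil+(d-\lceil\epsilon d\rceil)\eta_1+1)$, and then must remove the modulus-of-continuity term $\eta_1$ by letting $\epsilon\to0$ at the very end. Your separated-set version makes that Claim unnecessary: a $(\rho_2^\phi,\epsilon)$-separated set is by definition contained in the orbit set, so the key estimate applies directly at its points, the weight sums $\sum_{i}\varphi(x_i)$ transfer verbatim, and the only loss is the harmless shift $\epsilon\mapsto\epsilon-\gamma$ in the separation parameter, which $\sup_{\epsilon>0}$ absorbs; this is shorter, needs no continuity modulus, and keeps the whole argument in the quantity $M_\epsilon$ that defines the pressure rather than passing through the spanning/separated equivalence. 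In the reverse direction the paper simply cites the containment from \cite{Bow3}, whereas you sketch its proof — correctly, via $\phi_i\cdot(f\cdot x)=(\phi_i f)\cdot x$ and the chain through $x\circ(\sigma(\phi_i)\sigma(f))$ and $x\circ\sigma(\phi_i f)$, with the exceptional coordinates contributing at most $|\sigma(\phi_i f)\Delta\sigma(\phi_i)\sigma(f)|_d\,{\rm diam}(\rho)^2$ to $\rho_2^2$. One point you should make explicit in a final write-up: that containment holds only for $(F',\cdot)$-multiplicative $\sigma$, so your inequality between $M_\epsilon$-quantities holds only $\mathbb{P}_j$-almost surely for large $j$, and it is property (ii) of the sofic approximation that lets it survive the norms $\|\cdot\|_{p,\mathbb{P}_j}$ and the passage to $\lim_{j\to\beta}$; the paper is equally terse on this, so it is a presentational point rather than a gap.
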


\begin{proof}
For any $x,y\in(\mathcal{G}^0)^d, $ it follows from a
straightforward computation that $
\rho_2^\phi(x,y)^2=\sum\limits_{j=1}^\infty 2^{-j}\rho_2(\phi_j\cdot
x,\phi_j\cdot y)^2.$ Since $\phi_1=\mathcal{H}^0, \rho\leq
2\rho^\phi,$ we have

\begin{equation*}
Orb_\nu(\pi,\sigma,F,K,\delta,\rho^\phi)\subset
Orb_\nu(\pi,\sigma,F,K,2\delta,\rho)
\end{equation*}
for any $\sigma,F,K,\delta.$ We claim that

{\bf Claim:} There exists a $(\rho_2,2\epsilon)$-spanning set
$\mathcal{E}$ for $Orb_\nu(\pi,\sigma,F,K,2\delta,\rho),$ which is
contained in $Orb_\nu(\pi,\sigma,F,K,2\delta,\rho),$ and satisfies

\begin{equation*}\begin{split}
&\sum\limits_{(x_1,\cdots,
x_d)\in\mathcal{E}}\exp(\sum\limits_{i=1}^d\varphi(x_i))\\&\leq
\exp(2\|\varphi\|\lceil\epsilon d\rceil+(d-\lceil\epsilon
d\rceil)\eta_1+1)\cdot
M'_\epsilon(Orb_\nu(\pi,\sigma,F,K,2\delta,\rho),\rho_2,\varphi),
\end{split}\end{equation*}
where $\eta_1>0$ such that $ |\varphi( x')-\varphi( y')|<\eta_1,$
whenever  $x',y'\in\mathcal{G}^0, $ with
$\rho(x',y')<\sqrt{\epsilon}.$

\begin{proof}
Let $\mathcal{E}'$ be a minimal $(\rho_2,\epsilon)$-spanning set for
$Orb_\nu(\pi,\sigma,F,K,2\delta,\rho),$ with

\begin{equation*}
\sum\limits_{(x'_1,\cdots,x'_d)\in\mathcal{E}'}\exp(\sum\limits_{i=1}^d\varphi(x'_i))\leq
M'_\epsilon(Orb_\nu(\pi,\sigma,F,K,2\delta,\rho),\rho_2,\varphi)\cdot
\exp(1).
\end{equation*}
For each $x'=(x'_1,\cdots,x'_d)\in\mathcal{E}',$ there is an element
$x=(x_1,\cdots,x_d)\in Orb_\nu(\pi,\sigma,F,K,2\delta,\rho)$ such
that $\rho_2(x,x')<\epsilon.$ Let $\mathcal{E}$ be the collection of
such $x,$ then $\mathcal{E}$ is a $(\rho_2,2\epsilon)$-spanning set
for $Orb_\nu(\pi,\sigma,F,K,2\delta,\rho).$  Because
$\rho_2(x,x')<\epsilon,$ there exists $\Lambda\subset [d]$ such that
$|\Lambda|=\lceil\epsilon d\rceil$ and
$\rho(x_i,x'_i)<\sqrt{\epsilon}$ for $i\in[d]\setminus \Lambda.$
Therefore,

\begin{equation*}\begin{split}
&M'_\epsilon(Orb_\nu(\pi,\sigma,F,K,2\delta,\rho),\rho_2,\varphi)\cdot
\exp(1)\\&\geq\sum\limits_{(x'_1,\cdots,x'_d)\in\mathcal{E}'}\exp(\sum\limits_{i=1}^d\varphi(x'_i))\\&\geq
\sum\limits_{(x_1,\cdots,
x_d)\in\mathcal{E}}\exp(\sum\limits_{i=1}^d\varphi(x_i)-2\|\varphi\|\lceil\epsilon
d\rceil-(d-\lceil\epsilon d\rceil)\eta_1).
\end{split}\end{equation*}
The result is desired.
\end{proof}

Let $\Omega$ be the diameter of $(\mathcal{G}^0,\rho), F_n$ be any
finite subset of $[[\mathcal{H}]]_{top}$ containing
$\{\phi_1,\cdots,\phi_n\}.$ If $1\leq i\leq n$ and $x,y\in
Orb_\nu(\pi,\sigma,F_n,K,2\delta,\rho)$ satisfy
$\rho_2(x,y)<2\epsilon,$ then

\begin{equation*}\begin{split}
&\rho_2(\phi_i\cdot x,\phi_i\cdot y)\\&\leq
\rho_2(x\circ\sigma(\phi_i),y\circ\sigma(\phi_i))+
\rho_2(x\circ\sigma(\phi_i),\phi_i\cdot
x)+\rho_2(y\circ\sigma(\phi_i),\phi_i\cdot y)\\&<2\epsilon+4\delta.
\end{split}\end{equation*}
This implies that
$\rho_2^\phi(x,y)^2=\sum\limits_{j=1}^\infty2^{-j}\rho_2(\phi_j\cdot
x,\phi_j\cdot y)^2<2^{-n}\Omega^2+(2\epsilon+4\delta)^2.$ By Claim,
there exists a $(\rho_2,2\epsilon)$-spanning set $\mathcal{E}$ for
$Orb_\nu(\pi,\sigma,F_n,K,2\delta,\rho),$ which is contained in
$Orb_\nu(\pi,\sigma,F_n,K,2\delta,\rho),$ and satisfies

\begin{equation*}\begin{split}
&\sum\limits_{(x_1,\cdots,
x_d)\in\mathcal{E}}\exp(\sum\limits_{i=1}^d\varphi(x_i))\\&\leq
\exp(2\|\varphi\|\lceil\epsilon d\rceil+(d-\lceil\epsilon
d\rceil)\eta_1+1)\cdot
M'_\epsilon(Orb_\nu(\pi,\sigma,F_n,K,2\delta,\rho),\rho_2,\varphi).
\end{split}\end{equation*}
So, for any $x\in Orb_\nu(\pi,\sigma,F_n,K,2\delta,\rho)$ there
exists $y\in\mathcal{E}$ with $\rho_2(x,y)<2\epsilon,$ which implies
$\rho^\phi_2(x,y)^2<2^{-n}\Omega^2+(2\epsilon+4\delta)^2.$ Thus,
$\mathcal{E}$ is
$(\rho_2^\phi,\sqrt{2^{-n}\Omega^2+(2\epsilon+4\delta)^2})$-spanning.
Let $\eta=\sqrt{2^{-n}\Omega^2+(2\epsilon+4\delta)^2}.$ Then

\begin{equation*}\begin{split}
&\exp(2\|\varphi\|\lceil\epsilon d\rceil+(d-\lceil\epsilon
d\rceil)\eta_1+1)\cdot
M'_\epsilon(Orb_\nu(\pi,\sigma,F_n,K,2\delta,\rho),\rho_2,\varphi)\\&\geq
M'_\eta(Orb_\nu(\pi,\sigma,F_n,K,2\delta,\rho),\rho^\phi_2,\varphi)\\&\geq
M'_\eta(Orb_\nu(\pi,\sigma,F_n,K,\delta,\rho^\phi),\rho^\phi_2,\varphi),
\end{split}\end{equation*}
where the last inequality is due to the inclusion
$Orb_\nu(\pi,\sigma,F_n,K,\delta,\rho^\phi)\subset
Orb_\nu(\pi,\sigma,F_n,K,2\delta,\rho).$ By monotonicity, if $n$ is
large enough and $\delta$ is small enough then $3\epsilon>\eta$
which implies

\begin{equation*}\begin{split}
&\exp(2\|\varphi\|\lceil\epsilon d\rceil+(d-\lceil\epsilon
d\rceil)\eta_1+1)\cdot
M'_\epsilon(Orb_\nu(\pi,\sigma,F_n,K,2\delta,\rho),\rho_2,\varphi)\\&\geq
M'_{3\epsilon}(Orb_\nu(\pi,\sigma,F_n,K,\delta,\rho^\phi),\rho^\phi_2,\varphi).
\end{split}\end{equation*}
Since $F_n$ is any finite subset of $[[\mathcal{H}]]_{top}$
containing $\{\phi_1,\cdots, \phi_n\} $ and Remark \ref{rem3.2}, we
have

\begin{equation*}\begin{split}
&2\|\varphi\|\epsilon+(1-\epsilon)\eta_1+\inf\limits_{\delta>0}\inf\limits_{F\subset_f[[\mathcal{H}]]_{top}}\inf\limits_{K\subset_f
C(\mathcal{H}^0)}\lim\limits_{j\to\beta}\frac{1}{d_j}\log\|M'_\epsilon(Orb_\nu(\pi,\cdot,F,K,2\delta,\rho),\rho_2,\varphi)\|_{p,\mathbb{P}_j}\\&\geq
\inf\limits_{\delta>0}\inf\limits_{F\subset_f[[\mathcal{H}]]_{top}}\inf\limits_{K\subset_f
C(\mathcal{H}^0)}\lim\limits_{j\to\beta}\frac{1}{d_j}\log\|M'_{3\epsilon}(Orb_\nu(\pi,\cdot,F,K,\delta,\rho^\phi),\rho^\phi_2,\varphi)\|_{p,\mathbb{P}_j}.
\end{split}\end{equation*}
Since this is true for every $\epsilon>0$ and $\eta_1\to0$ as
$\epsilon\to0,$ we have $P_\mathbb{P}(\pi,\rho,2,\varphi)\geq
P_\mathbb{P}(\pi,\rho^\phi,2,\varphi),$ which implies
$P_\mathbb{P}(\pi,\rho,\varphi)\geq
P_\mathbb{P}(\pi,\rho^\phi,\varphi).$

Now, it's turn to show the opposite inequality. It was claimed that
given any finite $F\subset[[\mathcal{H}]]_{top}$ with
$\mathcal{H}^0\in F$ and $\delta>0,$ if $n$ is sufficiently large,
$F'=\{\phi_jf:f\in F,1\leq j\leq n\}$ and $\sigma$ is
$(F',\delta^2/M^2)$-multiplicative then

\begin{equation*}
Orb_\nu(\pi,\sigma,F,K,2\delta,\rho^\phi)\supset
Orb_\nu(\pi,\sigma,F',K,\delta,\rho)
\end{equation*}
in \cite{Bow3}. The claim and $\rho\leq 2\rho^\phi$  show that

\begin{equation*}
M'_\epsilon(Orb_\nu(\pi,\sigma,F,K,2\delta,\rho^\phi),\rho^\phi_\infty,\varphi)\geq
M'_{2\epsilon}(Orb_\nu(\pi,\sigma,F',K,\delta,\rho),\rho_\infty,\varphi).
\end{equation*}
This implies that $P_\mathbb{P}(\pi,\rho^\phi,\varphi)\geq
P_\mathbb{P}(\pi,\rho,\varphi).$
\end{proof}

{\it Proof of Theorem \ref{thm3.1}}  By \cite{Bow3}, there exists a
dense sequence
$\phi=\{\phi_i\}_{i=1}^\infty\subset[\mathcal{H}]_{top}$ with
$\phi_1=\mathcal{H}^0.$ By Lemmas \ref{lem3.5}, \ref{lem3.4},
\ref{lem3.3},
$P_\mathbb{P}(\pi,\rho,\varphi)=P_\mathbb{P}(\pi,\rho^\phi,\varphi)=P_\mathbb{P}(\pi,\rho'^{\phi},\varphi)=P_\mathbb{P}(\pi,\rho',\varphi).$
\hfill$\Box$
\section{Measure entropy}
There are two definitions of measure sofic entropy for groupoid. One
is defined via partitions by Bowen \cite{Bow3} in a manner analogous
to \cite{Ker}, the other is via pseudo-metric like topological
entropy.

Let $\pi:(\mathcal{G},\mu)\to(\mathcal{H},\nu)$ be a class-bijective
extension of pmp discrete groupoids and
$\mathbb{P}:=\{\mathbb{P}_j\}_{j\in J}$ a sofic approximation to
$(\mathcal{H},\nu).$ For a finite partition $\mathcal{P}$ of
$\mathcal{G}^0$  and $F\subset_f[[\mathcal{H}]],$ let
$\mathcal{P}^F$ be the coarsest partition of $\mathcal{G}^0$
containing $\{f\cdot P:f\in F,P\in\mathcal{P}\}.$ Let
$\Sigma(\mathcal{P})$ denote the smallest sigma algebra of
$\mathcal{G}^0$ containing $\mathcal{P}.$ Let
$\mathcal{B}(\Delta_d^0)$ be the collection of all subsets of
$\Delta_d^0.$ A map
$\phi:\Sigma(\mathcal{P})\to\mathcal{B}(\Delta_d^0)$ is a
homeomorphism means that given $A,B\in\Sigma(\mathcal{P}),\phi(A\cup
B)=\phi(A)\cup\phi(B),\phi(A\cap B)=\phi(A)\cap\phi(B)$ and
$\phi(\mathcal{G}^0)=\Delta_d^0.$

\begin{defn}{\rm\cite{Bow3}(Good homomorphism)}
For $\sigma:[[\mathcal{H}]]\to[[d]],f\in[[\mathcal{H}]],\delta>0$
and $F\subset[[\mathcal{H}]],$ let
$Hom(\pi,\sigma,\mathcal{P},F,\delta)$ be the collection of all
homomorphisms $\phi:\Sigma(\mathcal{P}^F)\to\mathcal{B}(\Delta_d^0)$
such that

(1)
$\sum\limits_{P\in\mathcal{P}}|\sigma(f)\cdot\phi(P)\Delta\phi(f\cdot
P)|<d\delta,\forall f\in F;$

(2)
$\sum\limits_{P\in\mathcal{P}^F}||\phi(P)|d^{-1}-\mu(P)|<\delta.$
\end{defn}

Given a partition $\mathcal{Q}$ of $\mathcal{G}^0$ with
$\mathcal{Q}\leq\mathcal{P},$ let
$|Hom(\pi,\sigma,\mathcal{P},F,\delta)|_\mathcal{Q}$ be the
cardinality of the set of homomorphisms
$\phi:\Sigma(\mathcal{Q})\to\mathcal{B}(\Delta_d^0)$ such that there
exists a $\phi'\in Hom(\pi,\sigma,\mathcal{P},F,\delta)$ satisfying
that $\phi$ is the restriction of $\phi'$ to $\Sigma(\mathcal{Q}).$

\begin{defn}{\rm\cite{Bow3}}\label{defn4.2}
Let $\mathcal{B}(\mathcal{G}^0)$ be the Borel sigma algebra of
$\mathcal{G}^0.$ For finite Borel partitions
$\mathcal{Q}\leq\mathcal{P}$ and a sub-algebra
$\mathcal{F}\subset\mathcal{B}(\mathcal{G}^0),$ define

\begin{equation*}\begin{split}
h_{\mathbb{P},\mu}(\pi,\mathcal{Q},\mathcal{P},F,\delta)&:=\lim\limits_{j\to\beta}\frac{1}{d_j}\log\|
|Hom(\pi,\cdot,\mathcal{P},F,\delta)|_\mathcal{Q}\|_{p,\mathbb{P}_j}\\
h_{\mathbb{P},\mu}(\pi,\mathcal{Q},\mathcal{P})&:=\inf\limits_{F\subset_f[[\mathcal{H}]]}\inf\limits_{\delta>0}h_{\mathbb{P},\mu}(\pi,\mathcal{Q},\mathcal{P},F,\delta)\\
h_{\mathbb{P},\mu}(\pi,\mathcal{Q},\mathcal{F})&:=\inf\limits_{\mathcal{Q}\leq\mathcal{P}\subset\mathcal{F}}h_{\mathbb{P},\mu}(\pi,\mathcal{Q},\mathcal{P})\\
h_{\mathbb{P},\mu}(\pi,\mathcal{F})&:=\sup\limits_{\mathcal{Q}\subset\mathcal{F}}h_{\mathbb{P},\mu}(\pi,\mathcal{Q},\mathcal{F}).
\end{split}\end{equation*}
The infimum in the second-to-last line runs over all finite Borel
partitions $\mathcal{P}$ with $\mathcal{Q}\leq
\mathcal{P}\subset\mathcal{F}$ and the supremum in the last line
runs over all finite partition $\mathcal{Q}\subset\mathcal{F}.$ The
sofic measure entropy of $\pi$ with respect to $\mathbb{P},p,\beta$
is given by
$h_{\mathbb{P},\mu}(\pi):=h_{\mathbb{P},\mu}(\pi,\mathcal{B}(\mathcal{G}^0)).$
\end{defn}

\begin{rem}{\rm\cite{Bow3}}
\begin{itemize}
  \item $h_{\mathbb{P},\mu}(\pi)$ depends implicity on $1\leq p\leq
\infty$ and a bias $\beta.$
  \item The order of the supremums, infimums and limits above is
important with the exception that the three infimums can be permuted
without affecting the definition of
$h_{\mathbb{P},\mu}(\pi,\mathcal{Q},\mathcal{F}).$
  \item The quantity $\frac{1}{d_j}\log \||Hom(\pi,\cdot,\mathcal{P},F,\delta)|_\mathcal{Q}\|_{p,\mathbb{P}_j}$
\end{itemize}
is monotone increasing in $\delta,\mathcal{Q}$ and decreasing in
$F,\mathcal{P}.$
\end{rem}

\begin{defn}{\rm\cite{Bow3}}
Let $(\mathcal{H},\nu)$ be a discrete pmp groupoid,
$\sigma:[[\mathcal{H}]]\to[[d]],F\subset_f[[\mathcal{H}]]$ and
$\delta>0. \sigma$ is $(F,\delta)$-continuous means that
$|\sigma(f)\Delta\sigma(g)|_d<\delta+\nu(f\Delta g),\forall f,g\in
F.$ If $\mathbb{P}=\{\mathbb{P}_j\}_{j\in J}$ is a sofic
approximation to $(\mathcal{H},\nu),$ then $\mathbb{P}$ is
asymptotically continuous  means that for every
$F\subset_f[[\mathcal{H}]],\delta>0,$ there exists $j\in J$ such
that $j'\geq j$ implies $\mathbb{P}_{j'}$-almost every $\sigma$ is
$(F,\delta)$-continuous.
\end{defn}

Now, let's present the formulation of measure entropy via
pseudo-metrics.

The assumption is given as follows: two discrete pmp topological
groupoids $\mathcal{G},\mathcal{H}$ such that $\mathcal{G}^0$ and
$\mathcal{H}^0$ are compact metrizable spaces, a class-bijective
continuous factor $\pi:(\mathcal{G},\mu)\to (\mathcal{H},\nu),$ a
sofic approximation $\mathbb{P}=\{\mathbb{P}_j\}_{j\in J}$ to
$(\mathcal{H},\nu),$ a continuous pseudo-metric $\rho$ on
$\mathcal{G}^0,$ a bias $\beta$ and $p\in [1,\infty].$ From this
data, the sofic measure entropy of $(\pi,\rho)$ with respect to
$(\mathbb{P},p,\beta)$ can be defined. Furthermore, if $\rho$ is
dynamically generating, $\mathcal{H}$ is \'etale, $\nu$ is regular
and $\mathbb{P}$ is asymptotically continuous, then this entropy
coincides with the Definition \ref{defn4.2}. The reader is referred
to \cite{Bow3} for details.

\begin{defn}{\rm\cite{Bow3}}
For a map
$\sigma:[[\mathcal{H}]]\to[[d]],F\subset_f[[\mathcal{H}]]_{top},K\subset_f
C(\mathcal{G}^0)$ and $\delta>0,$ let
$Orb_\mu(\pi,\sigma,F,K,\delta,\rho)$ be the set of all
$(x_1,\cdots,x_d)\in Orb_\nu(\pi,\sigma,F,\emptyset,\delta,\rho)$
such that

\begin{equation*}
\max\limits_{x\in
K}\left|\frac{1}{d}\sum\limits_{i=1}^dk(x_i)-\int_{\mathcal{G}^0}k
d\mu\right|<\delta.
\end{equation*}
Define

\begin{equation*}\begin{split}
&h_{\mathbb{P},\mu}(\pi,\rho,2):=\sup\limits_{\epsilon>0}\inf\limits_{\delta>0}\inf\limits_{F\subset_f[[\mathcal{H}]]_{top}}\inf\limits_{K\subset_f
C(\mathcal{G}^0)}\lim\limits_{j\to\beta}\frac{1}{d_j}\log
\|N_\epsilon(Orb_\mu(\pi,\cdot,F,K,\delta,\rho),\rho_2)\|_{p,\mathbb{P}_j},\\&
h_{\mathbb{P},\mu}(\pi,\rho,\infty):=\sup\limits_{\epsilon>0}\inf\limits_{\delta>0}\inf\limits_{F\subset_f[[\mathcal{H}]]_{top}}\inf\limits_{K\subset_f
C(\mathcal{G}^0)}\lim\limits_{j\to\beta}\frac{1}{d_j}\log
\|N_\epsilon(Orb_\mu(\pi,\cdot,F,K,\delta,\rho),\rho_\infty)\|_{p,\mathbb{P}_j}.
\end{split}
\end{equation*}

\begin{rem}{\rm \cite{Bow3}}
\begin{itemize}
  \item $h_{\mathbb{P},\mu}(\pi,\rho,2)$ and
$h_{\mathbb{P},\mu}(\pi,\rho,\infty)$ depend implicity on the choice
of bias $\beta$ and parameter $p\in[1,\infty].$
  \item The order of the supremums, infimums and limits above is
important with the exception that the three infimums can be permuted
without affecting the definition.
   \item The quantity
$\frac{1}{d_j}\log\|N_\epsilon(Orb_\mu(\pi,\cdot,F,K,\delta,\rho),\rho_2)\|_{p,\mathbb{P}_j}$
is monotone increasing in $\delta$ and decreasing in $\epsilon,F,K.$
\item Similar statements remain valid if $\rho_2$ is replaced with
$\rho_\infty$ or $N_\epsilon$ is replaced with $N'_\epsilon.$
 \item If we replace $N_\epsilon(\cdot)$ with $N'_\epsilon(\cdot),$
then we get equivalent definitions.
\item $h_{\mathbb{P},\mu}(\pi,\rho,2)=h_{\mathbb{P},\mu}(\pi,\rho,\infty).$
\item Let
$h_{\mathbb{P},\mu}(\pi,\rho):=h_{\mathbb{P},\mu}(\pi,\rho,2).$ If
$\rho,\rho'$ are dynamically generating continuous pseudo-metrics on
$\mathcal{G}^0$ then
$h_{\mathbb{P},\mu}(\pi,\rho)=h_{\mathbb{P},\mu}(\pi,\rho').$
\item If $\rho$ is a dynamically generating continuous pseudo-metric
on $\mathcal{G}^0,\mathcal{H}$ is \'etale, $\nu$ is regular and
$\mathbb{P}$ is asymptotically continuous then
$h_{\mathbb{P},\mu}(\pi,\rho)=h_{\mathbb{P},\mu}(\pi).$
\end{itemize}
\end{rem}

\end{defn}

\section{The variational principle}
This section is aim to reveal the relationship between topological
pressure and measure entropy. First, we present two propositions the
proof of which can be found in \cite{Bow3}.

\begin{prop}{\rm\cite{Bow3}}\label{prop5.1}
Let $(\mathcal{H},\nu)$ be a pmp separable \'{e}tale topological
discrete groupoid, $\mathcal{G}$ be a separable topological discrete
groupoid, and $\mu$ be a Borel probability measure on
$\mathcal{G}^0,$ and suppose that $\mu$ is
$[[\mathcal{H}]]_{top}$-invariant in the sense that $\mu(k\circ
f)=\mu(k\circ\tau(f))$ for any continuous function $k\in
C(\mathcal{G}^0)$ and $f\in[[\mathcal{H}]]_{top},$ then
$(\mathcal{G},\mu)$ is  pmp.
\end{prop}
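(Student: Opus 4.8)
The plan is to verify the defining condition of a pmp groupoid directly, namely that the two measures $\mu_\varsigma,\mu_\tau$ on $\mathcal{G}$ induced by $\mu$ coincide. First I would record the elementary reduction: for a Borel bisection $g\in[[\mathcal{G}]]$ one has $|\varsigma^{-1}(x)\cap g|=\mathbf 1_{\varsigma(g)}(x)$ and $|\tau^{-1}(x)\cap g|=\mathbf 1_{\tau(g)}(x)$, so $\mu_\varsigma(g)=\mu(\varsigma(g))$ and $\mu_\tau(g)=\mu(\tau(g))$. Since $\mathcal{H}$ is \'etale and separable it is covered by countably many open bisections $\{U_n\}$, and because $\pi$ is a continuous class-bijective factor, $\pi^{-1}\colon[[\mathcal{H}]]\to[[\mathcal{G}]]$ is a homeomorphism carrying open bisections to open bisections; hence $\{V_n:=\pi^{-1}(U_n)\}$ is a countable open-bisection cover of $\mathcal{G}$. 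Disjointifying, every Borel $B\subset\mathcal{G}$ splits as $B=\bigsqcup_n B_n$ with $B_n\subset V_n$, whence $\mu_\varsigma(B)=\sum_n\mu(\varsigma(B_n))$ and $\mu_\tau(B)=\sum_n\mu(\tau(B_n))$. Thus $\mu_\varsigma=\mu_\tau$ will follow once I show, for each $n$, that the partial homeomorphism $\theta_{U_n}:=\tau\circ(\varsigma|_{V_n})^{-1}$ of $\mathcal{G}^0$ (equivalently the action $x\mapsto U_n\cdot x$ on $\pi^{-1}(\varsigma(U_n))$) preserves $\mu$, because $\tau(B_n)=\theta_{U_n}(\varsigma(B_n))$.

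Next I would unwind the hypothesis. For $f\in[[\mathcal{H}]]_{top}$ the action $\theta_f\colon\pi^{-1}(\varsigma(f))\to\pi^{-1}(\tau(f))$, $x\mapsto f\cdot x$, is a Borel isomorphism, and the identity $\mu(k\circ f)=\mu(k\circ\tau(f))$ reads
\begin{equation*}
\int_{\pi^{-1}(\varsigma(f))}k(f\cdot x)\,d\mu(x)=\int_{\pi^{-1}(\tau(f))}k\,d\mu\qquad(k\in C(\mathcal{G}^0)).
\end{equation*}
Since continuous functions determine a Borel measure on the compact metrizable space $\mathcal{G}^0$, this says precisely that $(\theta_f)_*\big(\mu|_{\pi^{-1}(\varsigma(f))}\big)=\mu|_{\pi^{-1}(\tau(f))}$, that is $\mu(\theta_f(A))=\mu(A)$ for every Borel $A\subset\pi^{-1}(\varsigma(f))$. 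Hence the measure-preservation I need is already known for every member of $[[\mathcal{H}]]_{top}$.

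The heart of the argument is to upgrade this from $[[\mathcal{H}]]_{top}$ to an arbitrary open bisection $U$ (one of the $U_n$ above). Here I would exhaust $\varsigma(U)$ from inside by null-boundary sets: setting $W_t=\{y\in\mathcal{H}^0:\mathrm{dist}(y,\mathcal{H}^0\setminus\varsigma(U))>t\}$ one has $W_t\nearrow\varsigma(U)$ as $t\downarrow0$, and since the level sets $\{\mathrm{dist}=t\}$ are pairwise disjoint and their $\theta_U$-images are pairwise disjoint, the finite measure $\nu$ charges $\partial W_t$ or $\partial\,\theta_U(W_t)$ for only countably many $t$. Choosing $t_m\downarrow0$ avoiding this countable set yields $W_{t_m},\ \theta_U(W_{t_m})\in\mathcal{B}_{\partial}(\mathcal{H}^0,\nu)$, so $U_{t_m}:=U\cap\varsigma^{-1}(W_{t_m})\in[[\mathcal{H}]]_{top}$. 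Applying the previous paragraph to $U_{t_m}$ gives $\mu(\theta_U(A))=\mu(A)$ for all Borel $A\subset\pi^{-1}(W_{t_m})$; since $\pi^{-1}(W_{t_m})\nearrow\pi^{-1}(\varsigma(U))$ and $\mu$ is finite, continuity of $\mu$ lets me pass $m\to\infty$ and conclude $\mu(\theta_U(A))=\mu(A)$ for every Borel $A\subset\pi^{-1}(\varsigma(U))$. Feeding this into the first paragraph gives $\mu_\varsigma=\mu_\tau$, i.e. $(\mathcal{G},\mu)$ is pmp.

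The main obstacle is exactly this last approximation step: invariance is assumed only against the rigidly defined class $[[\mathcal{H}]]_{top}$, whose members must have \emph{both} source and range boundaries $\nu$-null, so inside a given open bisection one must manufacture cofinally many sub-bisections meeting these two simultaneous boundary constraints while still exhausting the source. The level-set/disjointness device, together with regularity (automatic here since $\mathcal{H}^0$ is compact metrizable and $\nu$ finite), is what makes this possible; the reduction to $\mu_\varsigma=\mu_\tau$ and the covering by open bisections are then routine Borel-groupoid bookkeeping.
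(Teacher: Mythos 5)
The paper does not actually contain a proof of this proposition: it is stated as a quotation from \cite{Bow3}, and the text says explicitly that the proof can be found there. So there is no in-paper argument to compare yours against; what follows judges your proposal on its own terms. Your argument is correct, and it is the natural reconstruction of the cited proof. The bookkeeping reduction is right: for a Borel bisection $g$ one has $\mu_\varsigma(g)=\mu(\varsigma(g))$ and $\mu_\tau(g)=\mu(\tau(g))$, so after covering $\mathcal{G}$ by the sets $\pi^{-1}(U_n)$ and disjointifying, the pmp condition $\mu_\varsigma=\mu_\tau$ reduces to $\mu$-invariance of each partial map $x\mapsto U_n\cdot x$. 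The Riesz step is also right: since $\mathcal{G}^0$ is compact metrizable, equality of the integrals of all $k\in C(\mathcal{G}^0)$ upgrades the functional hypothesis $\mu(k\circ f)=\mu(k\circ\tau(f))$ to the set-level invariance $\mu(f\cdot A)=\mu(A)$ for all Borel $A\subset\pi^{-1}(\varsigma(f))$, $f\in[[\mathcal{H}]]_{top}$. The genuinely nontrivial point --- that an arbitrary open bisection $U$ can be exhausted by sub-bisections lying in $[[\mathcal{H}]]_{top}$, i.e.\ with \emph{both} source and range of $\nu$-null topological boundary --- is handled correctly by your level-set device: $\overline{W_t}\setminus W_t\subset\{\mathrm{dist}=t\}$, and, because $\overline{W_t}$ is a \emph{compact} subset of $\varsigma(U)$, also $\overline{\theta_U(W_t)}\setminus\theta_U(W_t)\subset\theta_U(\{\mathrm{dist}=t\}\cap\varsigma(U))$; disjointness in $t$ then leaves only countably many bad parameters on the source and range sides simultaneously, and monotone convergence of $\mu$ along $W_{t_m}\nearrow\varsigma(U)$ finishes. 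Two minor points you should tighten: (1) extracting a countable cover of $\mathcal{H}$ by open bisections uses the Lindel\"of property, not bare separability; this is harmless in the paper's standing framework ($\mathcal{H}$ \'etale and separable with $\mathcal{H}^0$ compact metrizable) but should be said; (2) you overload $\theta_U$ for both the $\mathcal{H}^0$-level partial homeomorphism (where the $\nu$-null-boundary condition lives) and the $\mathcal{G}^0$-level action (where $\mu$-invariance lives); the identification $\varsigma(\pi^{-1}(U))=\pi^{-1}(\varsigma(U))$, which rests on class-bijectivity of $\pi$, is what lets you pass between the two and deserves an explicit line.
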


\begin{prop}{\rm\cite{Bow3}}\label{prop5.2}
Assume that $\Omega$ is a directed set and
$\omega\in\Omega\mapsto\mu_\omega\in M(\mathcal{G}^0)$ is a map such
that $\lim\limits_{\omega\to\Omega}\mu_\omega=\mu_\infty,$ and
$\pi_*\mu_\infty=\nu.$ Then
$\lim\limits_{\omega\to\Omega}\mu_\omega(k\circ f)=\mu_\infty(k\circ
f),$ for any $k\in C(\mathcal{G}^0)$ and
$f\in[[\mathcal{H}]]_{top}.$
\end{prop}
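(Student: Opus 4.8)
The plan is to regard $k\circ f$ as a bounded Borel function on $\mathcal{G}^0$, given by $(k\circ f)(x)=k(f\cdot x)$ for $x\in\pi^{-1}(\varsigma(f))$ and $(k\circ f)(x)=0$ otherwise, and to reduce the claim to a portmanteau argument. The obstruction is that $k\circ f$ is \emph{not} continuous on $\mathcal{G}^0$, so the weak$^*$ convergence $\mu_\omega\to\mu_\infty$ in $M(\mathcal{G}^0)$ does not apply to it directly. Splitting $k$ into real and imaginary parts I may assume $k$ real; and writing $f=\cup_{i=1}^n f_i$ with $f_i\subset U_i$ a bisection and $\{\varsigma(f_i)\}_{i=1}^n\subset\mathcal{B}_\partial(\mathcal{H}^0,\nu)$ pairwise disjoint (the defining form of an element of $[[\mathcal{H}]]_{top}$), I reduce everything to controlling the discontinuity set of $g:=k\circ f$.

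The key step is to show that the discontinuity set $D$ of $g$ satisfies $D\subset\pi^{-1}\big(\cup_{i=1}^n\mathrm{bd}\,\varsigma(f_i)\big)$, where $\mathrm{bd}$ is the topological boundary in $\mathcal{H}^0$. I argue pointwise: fix $x_0$ with $\pi(x_0)\notin\cup_i\mathrm{bd}\,\varsigma(f_i)$. Then either $\pi(x_0)\in\varsigma(f_{i_0})$ for some (necessarily unique) $i_0$—and then $\pi(x_0)\in\mathrm{int}\,\varsigma(f_{i_0})$, since $\pi(x_0)\notin\mathrm{bd}\,\varsigma(f_{i_0})$—or $\pi(x_0)\notin\varsigma(f_i)$ for every $i$. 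In the first case, for $x$ near $x_0$ one has $\pi(x)\in\varsigma(f_{i_0})$, so $f\cdot x=f_{i_0}\cdot x$; since $\pi$ is continuous and class-bijective, $\pi^{-1}(U_{i_0})$ is a bisection of $\mathcal{G}$, and the associated map $x\mapsto f_{i_0}\cdot x=\tau(\pi^{-1}(U_{i_0})x)$ is continuous on the open set $\pi^{-1}(\varsigma(U_{i_0}))\ni x_0$, whence $g$ is continuous at $x_0$. In the second case $\pi(x_0)\notin\overline{\varsigma(f_i)}=\varsigma(f_i)\cup\mathrm{bd}\,\varsigma(f_i)$ for every $i$, so a neighbourhood of $x_0$ maps under $\pi$ outside $\varsigma(f)$, and $g\equiv0$ there; again $g$ is continuous at $x_0$. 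This proves the inclusion.

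With the inclusion in hand I control the mass of $D$. Since $\varsigma(f_i)\in\mathcal{B}_\partial(\mathcal{H}^0,\nu)$ one has $\mathrm{bd}\,\varsigma(f_i)=\partial\varsigma(f_i)\cup\partial(\mathcal{H}^0\setminus\varsigma(f_i))$, so $\nu(\mathrm{bd}\,\varsigma(f_i))=0$; using $\pi_*\mu_\infty=\nu$ this gives $\mu_\infty(D)\le\nu\big(\cup_i\mathrm{bd}\,\varsigma(f_i)\big)=0$. I then invoke the portmanteau theorem for nets: passing to the upper and lower semicontinuous envelopes $\overline g\ge g\ge\underline g$ (which agree with $g$ off $D$, hence $\mu_\infty$-a.e., so $\mu_\infty(\overline g)=\mu_\infty(g)=\mu_\infty(\underline g)$), and using that on the compact metrizable space $\mathcal{G}^0$ one has $\limsup_\omega\mu_\omega(\overline g)\le\mu_\infty(\overline g)$ and $\liminf_\omega\mu_\omega(\underline g)\ge\mu_\infty(\underline g)$—each obtained by sandwiching the envelope between continuous functions and applying the weak$^*$ convergence—I conclude $\limsup_\omega\mu_\omega(g)\le\mu_\infty(g)\le\liminf_\omega\mu_\omega(g)$, i.e. $\lim_\omega\mu_\omega(k\circ f)=\mu_\infty(k\circ f)$.

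I expect the main obstacle to be the second step: pinning the discontinuity set of $k\circ f$ inside $\pi^{-1}(\cup_i\mathrm{bd}\,\varsigma(f_i))$ requires careful bookkeeping of how the partial action reassembles from its bisection pieces, and uses both the continuity of $\pi$ (to pull topological boundaries back from $\mathcal{H}^0$ to $\mathcal{G}^0$) and the continuity of the partial homeomorphisms determined by the $U_i$. Once the null discontinuity set is established the convergence is routine, the only care being that the semicontinuous portmanteau inequalities are applied in their net (rather than sequential) form.
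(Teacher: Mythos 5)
You should know at the outset that the paper does not prove this proposition at all: it is quoted from \cite{Bow3} ("we present two propositions the proof of which can be found in \cite{Bow3}"), so your argument can only be judged on its own merits. Judged that way, your overall strategy is the natural one, and most of it is sound: the identity $\mathrm{bd}\,A=\partial A\cup\partial(\mathcal{H}^0\setminus A)$ for $A\in\mathcal{B}_{\partial}(\mathcal{H}^0,\nu)$, the estimate $\mu_\infty(D)\le\pi_*\mu_\infty\bigl(\cup_i\mathrm{bd}\,\varsigma(f_i)\bigr)=\nu\bigl(\cup_i\mathrm{bd}\,\varsigma(f_i)\bigr)=0$, and the semicontinuous-envelope portmanteau argument (which indeed works for nets, since weak$^*$ convergence is only ever tested against fixed continuous functions) are all correct. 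The genuine gap is the key step of Case 1: the inference that $\pi^{-1}(U_{i_0})$ is a bisection of $\mathcal{G}$ \emph{because} $\pi$ is continuous and class-bijective. Those two hypotheses only give that $\varsigma$ restricted to $\pi^{-1}(U_{i_0})$ is a continuous \emph{bijection} onto the open set $\pi^{-1}(\varsigma(U_{i_0}))$; they do not make it a homeomorphism, and continuity of its inverse is precisely what the continuity of $x\mapsto f_{i_0}\cdot x=\tau(\pi^{-1}(U_{i_0})x)$ requires.

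This is not a pedantic point: the inference is false in general, and with it your inclusion $D\subset\pi^{-1}(\cup_i\mathrm{bd}\,\varsigma(f_i))$ fails. Take $\mathcal{H}=\{a,b\}^2$ the full (finite, discrete, \'etale) groupoid on two points with uniform $\nu$, let $\mathcal{G}^0=X_a\sqcup X_b$ be two circles, and let $\mathcal{G}=\mathcal{G}^0\sqcup V\sqcup V^{-1}$, where $V$ is the graph of a $3$-interval exchange $T:X_b\to X_a$, topologized as the disjoint union of three half-open arcs; then all structure maps are continuous, $\pi$ (the collapse onto $\mathcal{H}$) is a continuous class-bijective factor, and $U=\{(a,b)\}\in[[\mathcal{H}]]_{top}$ has $\varsigma(U)=\{b\}$ with empty boundary, yet $x\mapsto U\cdot x=T(x)$ is discontinuous at the three exchange points, all of which lie in $\pi^{-1}(\mathrm{int}\,\varsigma(U))$. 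Placing $\mu_\infty=\frac{1}{2}\delta_{z_0}+\frac{1}{2}\delta_{w_0}$ with $z_0$ one of these discontinuity points and letting $\mu_n$ move $z_0$ slightly, one even violates the conclusion of the proposition under the paper's literal hypotheses. So no argument can close your gap from "continuous $+$ class-bijective" alone; what is needed is exactly the background fact of \cite{Bow3}'s framework recorded in Section 2.2 of the paper, namely $\pi^{-1}([[\mathcal{H}]]_{top})\subset[[\mathcal{G}]]_{top}$ (equivalently, an \'etaleness-type standing assumption on $\mathcal{G}$ guaranteeing that, near each point of $\pi^{-1}(f_{i_0})$, the source map inverts continuously). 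If in Case 1 you cite that fact, or a bisection of $\mathcal{G}$ through the relevant point, instead of deriving it, the remainder of your proof goes through as written.
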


\begin{thm}\label{thm5.1}
Let $(\mathcal{H},\nu)$ be a pmp separable \'{e}tale topological
discrete groupoid, $\mathcal{G}$ be a separable topological discrete
groupoid, $\pi:\mathcal{G}\to\mathcal{H}$ a continuous
class-bijective factor, and $\mathbb{P}=\{\mathbb{P}_j\}_{j\in J}$
an asymptotically continuous sofic approximation to
$(\mathcal{H},\nu).$ If $\mathcal{H}^0$ and $\mathcal{G}^0$ are
compact and metrizable and $\nu$ is regular, then for any
$\varphi\in C(\mathcal{G}^0,\mathbb{R}),p\in[1,\infty]$ and bias
$\beta\neq-,$

\begin{equation*}
P_\mathbb{P}(\pi,\varphi)=\max\limits_{\mu}h_{\mathbb{P},\mu}(\pi)+\int_{\mathcal{G}^0}\varphi
d\mu,
\end{equation*}
where the max runs over all measures $\mu$ on $\mathcal{G}^0$ such
that $\pi_*\mu=\nu$ and $(\mathcal{G},\mu)$ is pmp.
\end{thm}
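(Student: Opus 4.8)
The plan is to establish the two inequalities separately, following the template of the entropy variational principle of \cite{Bow3} and the pressure argument of N.~Chung \cite{[Chu]}. Fix a dynamically generating continuous pseudo-metric $\rho$ on $\mathcal{G}^0$, so that by Theorem \ref{thm3.1} and Lemma \ref{lem3.3} the quantity $P_\mathbb{P}(\pi,\varphi)$ may be computed with this $\rho$ and with either $\rho_2$ or $\rho_\infty$ (Lemma 3.2). Throughout I use the equivalence of the separated-set and spanning-set formulations (Lemma 3.1).

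First I would prove the lower bound $P_\mathbb{P}(\pi,\varphi)\geq h_{\mathbb{P},\mu}(\pi)+\int\varphi\,d\mu$ for every pmp measure $\mu$ with $\pi_*\mu=\nu$. The idea is that any collection of homomorphisms $\phi\in Hom(\pi,\sigma,\mathcal{P},F,\delta)$ realizing the measure entropy gives rise, after refining $\mathcal{P}$ so its pieces have $\rho$-diameter below the relevant scale and choosing representative points in each piece, to approximate partial orbits $(x_1,\dots,x_d)\in Orb_\nu(\pi,\sigma,F,K,\delta',\rho)$. The trace conditions on $\sigma$ and the integral condition $\sum_P||\phi(P)|d^{-1}-\mu(P)|<\delta$ force $d^{-1}\sum_i\varphi(x_i)\approx\int\varphi\,d\mu$ and $d^{-1}\sum_i k(\pi(x_i))\approx\int k\,d\nu$, so these points genuinely lie in the orbit set and each contributes a weight $\exp(\sum_i\varphi(x_i))\approx\exp(d\int\varphi\,d\mu)$. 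Counting a separated family of such points of cardinality comparable to $|Hom|_\mathcal{Q}$, taking $\|\cdot\|_{p,\mathbb{P}_j}$ and the ordered limits, then sending $\epsilon\to0$, yields the bound. This direction uses the measure-pseudo-metric description of $h_{\mathbb{P},\mu}(\pi)$ (last Remark of Section 4) and only requires $\beta\neq-$ to match the bias conventions.

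Next I would prove the upper bound, which is where I expect the main obstacle to lie. The goal is to produce a single measure $\mu$ attaining $P_\mathbb{P}(\pi,\varphi)\leq h_{\mathbb{P},\mu}(\pi)+\int\varphi\,d\mu$. For each $j$ one selects a near-optimal $(\rho_\infty,\epsilon)$-separated set $\mathcal{E}_j\subset Orb_\nu(\pi,\sigma,F,K,\delta,\rho)$ and forms empirical measures on $\mathcal{G}^0$ by averaging over coordinates $x_i$, weighted by the Gibbs-type weights $\exp(\sum_i\varphi(x_i))$; normalizing these weights turns the counting problem into a measure-maximization problem via the standard Shannon--McMillan / convexity estimate $\sum_a w_a(c_a-\log w_a)$ bounded by $\log\sum_a e^{c_a}$. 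Passing to a weak$^*$ limit along the bias $\beta$ produces a candidate $\mu_\infty$ on $\mathcal{G}^0$; Proposition \ref{prop5.2} guarantees $\mu_\infty(k\circ f)=\lim\mu_\omega(k\circ f)$ and the trace conditions give $[[\mathcal{H}]]_{top}$-invariance, so Proposition \ref{prop5.1} shows $(\mathcal{G},\mu_\infty)$ is pmp with $\pi_*\mu_\infty=\nu$. The genuinely delicate point is disentangling the exponential potential weights from the entropy count so that the limiting measure simultaneously controls both terms: one must show the weighted separated-set growth rate splits as the measure-entropy of $\mu_\infty$ plus $\int\varphi\,d\mu_\infty$, which requires a careful partition-refinement argument relating $(\rho_\infty,\epsilon)$-separation to the homomorphism count $|Hom|_\mathcal{Q}$ uniformly in the sofic index, and invoking asymptotic continuity of $\mathbb{P}$ together with regularity of $\nu$ and étaleness of $\mathcal{H}$ to pass from the pseudo-metric entropy to $h_{\mathbb{P},\mu_\infty}(\pi)$.

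Finally, combining the two inequalities shows the supremum over admissible $\mu$ equals $P_\mathbb{P}(\pi,\varphi)$, and the weak$^*$ compactness of the set of pmp measures over $\mathcal{G}^0$ together with upper semicontinuity of $\mu\mapsto h_{\mathbb{P},\mu}(\pi)+\int\varphi\,d\mu$ (itself a consequence of the infimum-of-continuous-functions structure in Definition \ref{defn4.2}) promotes the supremum to a maximum, attained at $\mu_\infty$. The restriction $\beta\neq-$ enters because the weak$^*$ limit and the interchange of limits with the ordered $\sup_\epsilon\inf_\delta\cdots$ operations are only valid for $\beta=+$ or an ultrafilter, where $\lim_{j\to\beta}$ behaves submultiplicatively in the required direction.
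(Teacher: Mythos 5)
Your overall architecture --- two separate inequalities, the pseudo-metric formulation of pressure via Theorem \ref{thm3.1}, and Propositions \ref{prop5.1}, \ref{prop5.2} to show that the limiting measure is pmp with $\pi_*\mu=\nu$ --- agrees with the paper, and your ``lower bound'' direction is essentially the paper's Step 2: there the whole argument is the inclusion $Orb_\mu(\pi,\sigma,F,K',\delta,\rho)\subset Orb_\nu(\pi,\sigma,F,K,\delta,\rho)$ for $K'\supset\{\varphi\}\cup\{k\circ\pi:k\in K\}$, together with the pointwise weight bound $\sum_i\varphi(x_i)\geq d\mu(\varphi)-d\delta$ on $Orb_\mu$; your detour through $Hom(\pi,\sigma,\mathcal{P},F,\delta)$ just re-derives the partition/pseudo-metric equivalence that the paper simply cites from \cite{Bow3}. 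The genuine gap is in your upper bound, i.e. the paper's Step 1.

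The Gibbs-weight-plus-convexity device is the classical Misiurewicz argument and it does not transfer to the sofic setting. Writing $w_x=e^{dm_x(\varphi)}/\sum_{y\in\mathcal{E}_j}e^{dm_y(\varphi)}$, the identity $\log\sum_x e^{dm_x(\varphi)}=\sum_x w_x\bigl(dm_x(\varphi)-\log w_x\bigr)$ bounds the pressure count by $\hat\mu_j(\varphi)$ plus the Shannon term $-\frac{1}{d}\sum_x w_x\log w_x$, and that term is only bounded by $\frac{1}{d}\log|\mathcal{E}_j|$ --- the topological count you started from, so the estimate is circular. To replace it by $h_{\mathbb{P},\mu_\infty}(\pi)$ you must exhibit an exponentially non-negligible subfamily of $\mathcal{E}_j$ \emph{all of whose members have empirical measure uniformly close to one fixed measure}, since only such points lie in $Orb_{\mu}(\pi,\sigma,F,L,\delta,\rho)$ and are therefore counted by $N_\epsilon$; weak$^*$ convergence of the weighted average $\hat\mu_j$ gives no control on the individual $m_x$, so the ``delicate point'' you flag is not a refinement but the entire content of this direction, and convexity does not supply it. A second, related defect: the measure must be fixed \emph{before} the limit $j\to\beta$ is taken, because $h_{\mathbb{P},\mu}$ is defined as a $j$-limit at a fixed $\mu$; a measure produced as a weak$^*$ limit along $j$ cannot be fed back into that definition. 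The paper resolves both problems at once by discretization and pigeonhole: for each $\omega=(F_\omega,L_\omega,\delta_\omega)$ in a directed set $\Omega$ of parameters it fixes a finite $\delta_\omega$-dense family $D(\omega)\subset M(\mathcal{G}^0)$, assigns to each separated point $x$ a nearby $\mu_{x,\omega}\in D(\omega)$, pigeonholes to find $\mu_{j,\omega}\in D(\omega)$ whose fiber carries a $1/|D(\omega)|$ fraction of the weighted sum --- on this fiber all weights are uniformly $e^{d\mu_{j,\omega}(\varphi)\pm d\delta_\omega}$ and the fiber lies in $Orb_{\mu_{j,\omega}}$, hence has cardinality at most $N_\epsilon(Orb_{\mu_{j,\omega}},\rho_2)$ --- and then chooses $\mu_\omega\in D(\omega)$ so that $J'=\{j:\mu_{j,\omega}=\mu_\omega\}$ is cofinal or lies in the ultrafilter. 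This selection is exactly where $\beta\neq-$ enters: the pigeonhole bound holds only for $j\in J'$, and a $\limsup$ or ultralimit over $J$ can be controlled through $J'$, whereas a $\liminf$ over all of $J$ cannot be bounded below by information on a cofinal subset. The weak$^*$ limit is then taken along the net $\Omega$, not along $j$, and the inclusion $Orb_{\mu_\omega}\subset Orb_\mu$ for large $\omega$ transfers the entropy estimate to the limit measure. Finally, your appeal to upper semicontinuity of $\mu\mapsto h_{\mathbb{P},\mu}(\pi)+\int\varphi\,d\mu$ to promote the supremum to a maximum is unjustified: Definition \ref{defn4.2} is a supremum of infima of quantities involving $\mu$-measures of Borel partitions, which are not weak$^*$-continuous in $\mu$, and the paper's proof does not rely on any such statement.
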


\begin{proof}
The proof will be divided into the following two steps.

{\bf Step 1:} This step proves
$P_\mathbb{P}(\pi,\varphi)\leq\sup\limits_{\mu}h_{\mathbb{P},\mu}(\pi)+\int_{\mathcal{G}^0}\varphi
d\mu.$

Without loss of generality, we may assume
$P_\mathbb{P}(\pi,\varphi)>-\infty.$ Let $\kappa>0,$ then there
exists $\epsilon>0$ such that

\begin{equation*}
P^\epsilon_\mathbb{P}(\pi,\rho,2,\varphi)\geq
P_\mathbb{P}(\pi,\rho,2,\varphi)-\kappa,
\end{equation*}
where
$P^\epsilon_\mathbb{P}(\pi,\rho,2,\varphi):=\inf\limits_{\delta>0}\inf\limits_{F\subset_f[[\mathcal{H}]]_{top}}\inf\limits_{K\subset_f
C(\mathcal{H}^0)}\lim\limits_{j\to\beta}\frac{1}{d_j}\log
\|M_\epsilon(Orb_\nu(\pi,\cdot,F,K,\delta,\rho),\rho_2,\varphi)\|_{p,\mathbb{P}_j}.$
Let
$\Omega=\{(F,L,\delta):F\subset_f[[\mathcal{H}]]_{top},L\subset_f
C(\mathcal{G}^0),\delta>0\}.\Omega$ is considered as a directed set
by declaring $(F,L,\delta)\leq(F',L',\delta')$ if $F'\supset F,
L'\supset L,\delta'\leq \delta.$ Given
$\omega=(F_\omega,L_\omega,\delta_\omega)\in\Omega,$ let
$K_\omega=\{k\in C(\mathcal{H}^0):k\circ\pi\in L_\omega\}.$ Let
$M(\mathcal{G}^0)$ denote the space of Borel probability measures on
$\mathcal{G}^0.$

{\bf Claim:} There exists a directed net
$\omega\in\Omega\mapsto\mu_\omega\in M(\mathcal{G}^0)$ such that
\begin{description}
  \item[(i)] $h^\epsilon_{\mathbb{P},\mu_\omega}(\pi,\rho,\omega,2)+\int_{\mathcal{G}^0}\varphi d\mu_\omega+\delta_\omega
\geq P^\epsilon_\mathbb{P}(\pi,\rho,2,\varphi),$
where
\begin{equation*}\begin{split}
h^\epsilon_{\mathbb{P},\mu_\omega}(\pi,\rho,\omega,2)&:=h^\epsilon_{\mathbb{P},\mu_\omega}(\pi,\rho,F_\omega,L_\omega,\delta_\omega,2)\\&
:=\lim\limits_{j\to\beta}\frac{1}{d_j}\log\|N_\epsilon(Orb_{\mu_\omega}(\pi,\cdot,F_\omega,L_\omega,\delta_\omega,\rho),\rho_2)\|_{p,\mathbb{P}_j}.
\end{split}\end{equation*}

  \item[(ii)] $\lim\limits_{\omega\to\Omega}|\mu_\omega(k\circ f)-\mu_\omega(k\circ
\tau(f))|=0,$ for any $f\in[[\mathcal{H}]]_{top},k\in
C(\mathcal{G}^0).$
  \item[(iii)]
$\lim\limits_{\omega\to\Omega}|\mu_\omega(k\circ\pi)-\nu(k)|=0,$ for
any $k\in C(\mathcal{H}^0).$
\end{description}

\begin{proof}
For $\varphi\in L\subset_f C(\mathcal{G}^0)$ and $\delta>0,$ let
$M(L,\delta)$ be the set of all $\mu\in M(\mathcal{G}^0)$ such that
$|\mu(k\circ \pi)-\nu(k)|\leq \delta$ for any $k\in
C(\mathcal{H}^0)$ with $k\circ\pi\in L.$ For
$F\subset_f[[\mathcal{H}]]_{top},$ let $D(F,L,\delta)\subset
M(L,\delta)$ be a finite set satisfying that for every $\lambda\in
M(L,\delta)$ there exists a $\mu\in D(F,L,\delta)$ such that

\begin{equation*}
|\mu(k\circ f)-\lambda(k\circ f)|<\delta,~~\forall f\in F,k\in L.
\end{equation*}
For every $x\in(\mathcal{G}^0)^d,$ the measure $m_x\in
M(\mathcal{G}^0)$ is given by
$m_x=d^{-1}\sum\limits_{i=1}^d\delta_{x_i},$ where $\delta_{x_i}$ is
the Dirac measure concentrated on $x_i.$ For every
$\omega\in\Omega,$ choose a Borel map $x\in\{y\in(\mathcal{G}^0)^d:
m_y\in M(L_\omega,\delta_\omega)\}\mapsto \mu_{x,\omega}\in D(\omega)$ such that

\begin{equation*}
|\mu_{x,\omega}(k\circ f)-m_x(k\circ f)|<\delta_\omega,~~\forall
f\in F_\omega, k\in L_\omega.
\end{equation*}
For any
$\sigma:[[\mathcal{H}]]\to[[d]],F\subset_f[[\mathcal{H}]]_{top},K\subset_fC(\mathcal{H}^0)$
and $\delta>0,$ choose a maximum $(\rho_2,\epsilon)$-separated
subset $Q(\sigma, F, K, \delta )\subset
Orb_\nu(\pi,\sigma,F,K,\delta,\rho)$ such that for any $d_j,$ the
map $\sigma\in$ Map$([[\mathcal{H}]],[[d_j]])\mapsto Q(\sigma,
F,K,\delta)$ is Borel, and

\begin{equation*}
\exp(1)\sum\limits_{(x_1,\cdots,x_d)\in
Q(\sigma,F,K,\delta)}\exp(\sum\limits_{i=1}^d\varphi(x_i))\geq
M_\epsilon (Orb_\nu(\pi,\sigma,F,K,\delta,\rho),\rho_2,\varphi).
\end{equation*}
By the Pigeonhole Principle for any $\omega\in\Omega$ and $j\in J$
there exists $\mu_{j,\omega}\in D(\omega)$ such that

\begin{equation*}\begin{split}
&\|\sum\limits_{(x_1,\cdots,x_d)\in\{x\in
Q(\cdot,F_\omega,K_\omega,\delta_\omega):\mu_{j,\omega}=\mu_{x,\omega}\}}\exp(\sum\limits_{i=1}^d\varphi(x_i))\|_{p,\mathbb{P}_j}\\&\geq
\frac{\|\sum\limits_{(x_1,\cdots,x_d)\in
Q(\cdot,F_\omega,K_\omega,\delta_\omega)}\exp(\sum\limits_{i=1}^d\varphi(x_i))\|_{p,\mathbb{P}_j}}{|D(\omega)|}\\&\geq
\frac{\|M_\epsilon
(Orb_\nu(\pi,\cdot,F_\omega,K_\omega,\delta_\omega,\rho),\rho_2,\varphi)\|_{p,\mathbb{P}_j}}{|D(\omega)|}.
\end{split}\end{equation*}
Then choose $\mu_\omega\in D(\omega)$ so that if $J'=\{j\in
J:\mu_{j,\omega}=\mu_\omega\}$ then either $J'\in\beta$ (if $\beta$
is an ultrafilter on $J$) or $J'$ is cofinal. If $\beta=+,$ we also
require that

\begin{equation*}\begin{split}
&\limsup\limits_{j\in
J}d_j^{-1}\log\|M_\epsilon(Orb_{\mu_{j,\omega}}(\pi,\cdot,F_\omega,L_\omega,\delta_\omega),\rho_2,\varphi)\|_{p,\mathbb{P}_j}\\&=
\limsup\limits_{j\in
J'}d_j^{-1}\log\|M_\epsilon(Orb_{\mu_{j,\omega}}(\pi,\cdot,F_\omega,L_\omega,\delta_\omega),\rho_2,\varphi)\|_{p,\mathbb{P}_j}.
\end{split}\end{equation*}
This is possible due to the finiteness of $D(\omega).$ The measures
$\{\mu_\omega:\omega\in \Omega\}$ is desired. First, for any $j\in
J,\omega\in \Omega,$ if $x\in
Orb_\nu(\pi,\sigma,F_\omega,K_\omega,\delta_\omega,\rho)$ and
$\mu_{x,\omega}=\mu_{j,\omega},$ then

\begin{equation*}
|m_x(k)-\mu_{j,\omega}(k)|<\delta_\omega,~~\forall k\in L_\omega
\end{equation*}
implies $x\in
Orb_{\mu_{j,\omega}}(\pi,\sigma,F_\omega,L_\omega,\delta_\omega,\rho).$
Therefore,
\begin{equation*}\begin{split}
&\sum\limits_{(x_1,\cdots,x_d)\in\{x\in
Q(\sigma,F_\omega,K_\omega,\delta_\omega):\mu_{j,\omega}=\mu_{x,\omega}\}}\exp(\sum\limits_{i=1}^d\varphi(x_i))\\&\leq
\sum\limits_{(x_1,\cdots,x_d)\in\{x\in
Q(\sigma,F_\omega,K_\omega,\delta_\omega):\mu_{j,\omega}=\mu_{x,\omega}\}}\exp(d\mu_{j,\omega}(\varphi)+d\delta_\omega)\\&=
|\{x\in
Q(\sigma,F_\omega,K_\omega,\delta_\omega):\mu_{j,\omega}=\mu_{x,\omega}\}|\cdot\exp(d\mu_{j,\omega}(\varphi)+d\delta_\omega)\\&\leq
N_\epsilon(Orb_{\mu_{j,\omega}}(\pi,\sigma,F_\omega,L_\omega,\delta_\omega,\rho),\rho_2)\cdot\exp(d\mu_{j,\omega}(\varphi)+d\delta_\omega).
\end{split}\end{equation*}
By choice of $\mu_{j,\omega},$ this implies

\begin{equation*}\begin{split}
&\|N_\epsilon(Orb_{\mu_{j,\omega}}(\pi,\cdot,F_\omega,L_\omega,\delta_\omega,\rho),\rho_2)\|_{p,\mathbb{P}_j}\cdot\exp(d\mu_{j,\omega}(\varphi)+d\delta_\omega)\\&\geq
\frac{\|M_\epsilon
(Orb_\nu(\pi,\cdot,F_\omega,K_\omega,\delta_\omega,\rho),\rho_2,\varphi)\|_{p,\mathbb{P}_j}}{|D(\omega)|}\exp(-1).
\end{split}\end{equation*}
Due to the choice of $\mu_\omega,$ it follows that

\begin{equation*}\begin{split}
&h^\epsilon_{\mathbb{P},\mu_\omega}(\pi,\rho,\omega,2)\\&\geq
\lim\limits_{j\to\beta}d_j^{-1}\log\|N_\epsilon(Orb_{\mu_{j,\omega}}(\pi,\cdot,F_\omega,L_\omega,\delta_\omega,\rho),\rho_2)\|_{p,\mathbb{P}_j}\\&
\geq
\lim\limits_{j\to\beta}d_j^{-1}\log\|M_\epsilon(Orb_{\nu}(\pi,\cdot,F_\omega,L_\omega,\delta_\omega,\rho),\rho_2,\varphi)\|_{p,\mathbb{P}_j}
-\mu_\omega(\varphi) -\delta_\omega \\&\geq
P^\epsilon_\mathbb{P}(\pi,\rho,2,\varphi)-\mu_\omega(\varphi)
-\delta_\omega.
\end{split}\end{equation*}
$\beta\neq-$ is used in the first inequality. The finishes the proof
of the first item of Claim.

The proof of the following two items can be seen in \cite{Bow3}. We
provide it here for completeness. To prove the second item, let
$k\in C(\mathcal{G}^0),f\in[[\mathcal{H}]]_{top}$ and $\eta>0$ be a
constant. Since $k$ is continuous, there exists a constant
$\delta>0$ such that if $x,y\in\mathcal{G}^0$ with
$\rho(x,y)<\delta,$ then $|k(x)-k(y)|<\eta.$

Let $\omega\in\Omega$ be such that $k\in L_\omega,
f,f^{-1},\tau(f)\in F_\omega$ and $\delta_\omega$ is small enough so
that $\delta_\omega^2/\delta^2<\eta.$ Due to the choice of
$\mu_\omega,$ there exist a $(F_\omega,\delta_\omega^2/(100{\rm
diam}(\rho)^2))$-multiplicative $\sigma:[[\mathcal{H}]]\to[[d]]$ and
$x\in Orb_\nu(\pi,\sigma,F_\omega,K_\omega,\delta_\omega,\rho)$ such
that $\mu_{x,\omega}=\mu_\omega.$ It follows from a straightforward
computation that $|\mu_\omega(k\circ
f)-\mu_\omega(k\circ\tau(f))|\leq 2\delta_\omega+|m_x(k\circ
f)-m_{x\circ\sigma(f)}(k)|+|m_{x\circ\sigma(f)}(k)-m_x(k\circ\tau(f))|.$
It follows from $x\in
Orb_\nu(\pi,\sigma,F_\omega,K_\omega,\delta_\omega,\rho)$ that

\begin{equation*}\begin{split}
&\delta^2_\omega\geq\rho_2(f\cdot x,x\circ\sigma(f))^2\\&\geq
d^{-1}|\{1\leq i\leq d:\rho(f\cdot x_i, x_{\sigma(f)i})\geq
\delta\}|\delta^2.
\end{split}\end{equation*}
So, $\eta>\frac{\delta^2_\omega}{\delta^2}\geq d^{-1}|\{1\leq i\leq
d:\rho(f\cdot x_i, x_{\sigma(f)i})\geq \delta\}|$ which implies

\begin{equation*}\begin{split}
|m_x(k\circ f)-m_{x\circ\sigma(f)}(k)|\leq
d^{-1}\sum\limits_{i=1}^d|k(f\cdot x_i)-k(x_{\sigma(f)i})|\leq
2\eta+2\eta\|k\|.
\end{split}\end{equation*}
It follows from  $x\in
Orb_\nu(\pi,\sigma,F_\omega,K_\omega,\delta_\omega,\rho)$ that

\begin{equation*}\begin{split}
\delta_\omega^2&>\rho_2(x\circ\sigma(\tau(f)),\tau(f)\cdot
x)^2=d^{-1}\sum\limits_{i=1}^d\rho(x_{\sigma(\tau(f))_i},\tau(f)\cdot
x_i)^2\\&\geq d^{-1}{\rm diam}
(\rho)^2|\tau(\sigma(\tau(f)))\Delta\{i:x_i\in\tau(f)\}|\\&\geq
d^{-1}{\rm diam}
(\rho)^2|\tau(\sigma(f))\Delta\{i:x_i\in\tau(f)\}|-\delta_\omega^2.
\end{split}\end{equation*}
This implies that

\begin{equation*}\begin{split}
&|m_{x\circ\sigma(f)}(k)-m_x(k\circ\tau(f))|\\&=
d^{-1}\left|\sum\limits_{i\in\varsigma(\sigma(f))}k(x_{\sigma(f)i})-\sum\limits_{i:x_i\in\tau(f)}k(x_i)\right|\\&
=d^{-1}\left|\sum\limits_{i\in\tau(\sigma(f))}k(x_{\sigma(f)i})-\sum\limits_{i:x_i\in\tau(f)}k(x_i)\right|\\&\leq
d^{-1}\|k\|\cdot|\tau( \sigma(f))\Delta\{i:x_i\in\tau(f)\}|\\&\leq
d^{-1}\|k\|\frac{2\delta_\omega^2}{d^{-1}{\rm~
diam~}(\rho)^2}=\frac{2\delta^2_\omega\|k\|}{{\rm diam}~(\rho)^2}.
\end{split}\end{equation*}
Hence,

\begin{equation*}
|\mu_\omega(k\circ f)-\mu_\omega(k\circ\tau(f))|\leq
2\delta_\omega+2\eta+2\eta\|k\|+ \frac{2\delta^2_\omega\|k\|}{{\rm
diam}~(\rho)^2}.
\end{equation*}
Due to the arbitrariness of $\eta,f,k,$ we obtain

\begin{equation*}
\lim\limits_{\omega\to\Omega} |\mu_\omega(k\circ
f)-\mu_\omega(k\circ\tau(f))|=0,
\end{equation*}
for any $f\in[[\mathcal{H}]]_{top},k\in C(\mathcal{G}^0).$

It's turn to show the proof of the third of Claim. Let $k\in
C(\mathcal{H}^0).$ let $\omega\in\Omega$ be such that $k\in
K_\omega.$ Due to the choice of $\mu_\omega,$ there exists
$\sigma:[[\mathcal{H}]]\to[[d]]$ and $x\in
Orb_\nu(\pi,\sigma,F_\omega,K_\omega,\delta_\omega,\rho)$ with
$\mu_{x,\omega}=\mu_\omega.$ This implies that

\begin{equation*}
|\mu_\omega(k\circ\pi)-\nu(k)|\leq|\mu_\omega(k\circ\pi)-m_x(k\circ
\pi)|+|m_x(k\circ \pi)-\nu(k)|<2\delta_\omega.
\end{equation*}
Therefore,
$\lim\limits_{\omega\to\Omega}|\mu_\omega(k\circ\pi)-\nu(k)|=0$ as
required. The proof of Claim is completed.
\end{proof}
Let $\mu$ be a weak* accumulation point of $\{\mu_\omega:\omega\in
\Omega\}.$ It follows from the third item of Claim that
$\pi_*\mu=\nu.$ Due to (ii) and Proposition \ref{prop5.2}, we have
$\mu(k\circ f)=\mu(k\circ\tau(f))$ for any
$f\in[[\mathcal{H}]]_{top}, k\in\mathcal{G}^0.$ By Proposition
\ref{prop5.1}, $(\mathcal{G},\mu)$ is pmp. Let
$F\subset_f[[\mathcal{H}]]_{top},\varphi\in L\subset_f
C(\mathcal{G}^0)$ and $\delta>0.$ Choose $\omega\in \Omega$ to
satisfy

(1) $|\mu(k)-\mu_\omega(k)|\leq\delta/2,\forall k\in L;$

(2) $F\subset F_\omega,L\subset L_\omega,\delta_\omega\leq\delta/2.$

Then for any $\sigma:[[\mathcal{H}]]\to[[d]],x\in
Orb_{\mu_\omega}(\pi,\sigma,F_\omega,L_\omega,\delta_\omega,\rho)$
and $k\in L,$

\begin{equation*}
\left|\frac{1}{d}\sum\limits_{i=1}^dk(x_i)-\mu(k)\right|\leq\left|\frac{1}{d}\sum\limits_{i=1}^dk(x_i)-\mu_\omega(k)\right|
+|\mu_\omega(k)-\mu(k)|<\delta,
\end{equation*}
which implies that

\begin{equation*}
Orb_{\mu_\omega}(\pi,\sigma,F_\omega,L_\omega,\delta_\omega,\rho)\subset
Orb_{\mu}(\pi,\sigma,F,L,\delta,\rho).
\end{equation*}
So,

\begin{equation*}\begin{split}
&h^\epsilon_{\mathbb{P},\mu}(\pi,\rho,F,L,\delta,2)+\mu(\varphi)+\delta\\&\geq
h^\epsilon_{\mathbb{P},\mu_\omega}(\pi,\rho,F_\omega,L_\omega,\delta_\omega,2)+\mu_\omega(\varphi)\\&\geq
P^\epsilon_\mathbb{P}(\pi,\rho,2,\varphi)-\delta_\omega\\& \geq
P^\epsilon_\mathbb{P}(\pi,\rho,2,\varphi)-\delta/2.
\end{split}\end{equation*}
By taking the infimum over $F,L,\delta,$ we get

\begin{equation*}\begin{split}
&h_{\mathbb{P},\mu}(\pi,\rho,2)+\mu(\varphi)\geq
h^\epsilon_{\mathbb{P},\mu}(\pi,\rho,2)+\mu(\varphi)\\&\geq
P^\epsilon_\mathbb{P}(\pi,\rho,2,\varphi) \geq
P_\mathbb{P}(\pi,\rho,2,\varphi)-\kappa.
\end{split}\end{equation*}
Step 1 follows due to the arbitrariness of $\kappa.$

{\bf Step 2:} This step proves
$P_\mathbb{P}(\pi,\varphi)\geq\max\limits_{\mu}h_{\mathbb{P},\mu}(\pi)+\int_{\mathcal{G}^0}\varphi
d\mu.$

Fix $\mu\in M(\mathcal{G}^0)$ with $\pi_*\mu=\nu.$ Let
$\delta>0,F\subset_f[[\mathcal{H}]]_{top}$ and $K\subset_f
C(\mathcal{H}^0).$ Let $\{\varphi\}\cup\{k\circ\pi:k\in K\}\subset
K'\subset_f C(\mathcal{G}^0).$ Then for any
$\sigma:[[\mathcal{H}]]\to[[d]],$

\begin{equation*}
Orb_\mu(\pi,\sigma,F,K',\delta,\rho)\subset
Orb_\nu(\pi,\sigma,F,K,\delta,\rho).
\end{equation*}
Then

\begin{equation*}\begin{split}
&M_\epsilon(Orb_\nu(\pi,\sigma,F,K,\delta,\rho),\rho_\infty,\varphi)\\&\geq
N_\epsilon(Orb_\mu(\pi,\sigma,F,K',\delta,\rho),\rho_\infty)\cdot\exp(d\mu(\varphi)-d\delta-1).
\end{split}\end{equation*}

This implies that

\begin{equation*}\begin{split}
&\lim\limits_{j\to\beta}\frac{1}{d_j}\log\|M_\epsilon(Orb_\nu(\pi,\cdot,F,K,\delta,\rho),\rho_\infty,\varphi)\|_{p,\mathbb{P}_j}\\&\geq
\lim\limits_{j\to\beta}\frac{1}{d_j}\log
\|N_\epsilon(Orb_\mu(\pi,\sigma,F,K',\delta,\rho),\rho_\infty)\|_{p,\mathbb{P}_j}+\mu(\varphi)-\delta.
\end{split}\end{equation*}
By taking the infimum over $F, K',K,\delta$ and the supremum over
$\epsilon,$ then $P_\mathbb{P}(\pi,\varphi)\geq
h_{\mathbb{P},\mu}(\pi)+\int_{\mathcal{G}^0}\varphi d\mu.$ This
finishes the proof.
\end{proof}
Some properties of topological pressure are presented as below
without proof. It is easier to investigate these properties of
topological pressure by variational principle than by  definition.

\begin{prop}
The conditions are given as the conditions of Theorem \ref{thm5.1}.
\begin{description}
  \item[(i)] $P_\mathbb{P}(\pi,\mathbf{0})$ is topological entropy
defined in {\rm \cite{Bow3}}.
  \item[(ii)]
$P_\mathbb{P}(\pi,\varphi+c)=P_\mathbb{P}(\pi,\varphi)+c,$ where $c$
is a constant.
  \item[(iii)] $P_\mathbb{P}(\pi,\varphi+\psi)\leq
P_\mathbb{P}(\pi,\varphi)+P_\mathbb{P}(\pi,\psi),$ where $\psi\in
C(\mathcal{G}^0,\mathbb{R}).$
\item[(iv)] $\varphi\leq\psi$ implies $P_\mathbb{P}(\pi,\varphi)\leq
P_\mathbb{P}(\pi,\psi).$ In particular,
$P_\mathbb{P}(\pi,\mathbf{0})+\min \varphi\leq
P_\mathbb{P}(\pi,\varphi)\leq
P_\mathbb{P}(\pi,\mathbf{0})+\max\varphi.$
\item[(v)]  $P_\mathbb{P}(\pi,\cdot)$ is either finite valued or
constantly $\pm\infty.$
\item[(vi)] If $P_\mathbb{P}(\pi,\cdot)\neq\pm\infty,$ then $|P_\mathbb{P}(\pi,\varphi)-P_\mathbb{P}(\pi,\psi)|\leq\|\varphi-\psi\|.$
\item[(vii)] If $P_\mathbb{P}(\pi,\cdot)\neq\pm\infty,$ then
$P_\mathbb{P}(\pi,\cdot)$ is convex.
\item[(viii)] $P_\mathbb{P}(\pi,c\varphi)\leq c\cdot
P_\mathbb{P}(\pi,\varphi),$ if $c\geq1$ and
$P_\mathbb{P}(\pi,c\varphi)\geq c\cdot P_\mathbb{P}(\pi,\varphi),$
if $c\leq1.$
\item[(ix)] $|P_\mathbb{P}(\pi,\varphi)|\leq P_\mathbb{P}(\pi,|\varphi|).$
\item[(x)] $P_\mathbb{P}(\pi,\psi+\varphi\circ
f-\varphi)=P_\mathbb{P}(\pi,\psi),$ for any
$f\in[[\mathcal{H}]]_{top}.$
\end{description}
\end{prop}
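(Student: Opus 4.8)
The plan is to derive every item from the variational principle of Theorem \ref{thm5.1}, abbreviating $P(\varphi):=P_\mathbb{P}(\pi,\varphi)$ and writing $P(\varphi)=\max_{\mu\in\mathcal M}\big(h(\mu)+\mu(\varphi)\big)$, where $\mathcal M$ is the set of $\mu$ with $\pi_*\mu=\nu$ and $(\mathcal G,\mu)$ pmp, $h(\mu):=h_{\mathbb{P},\mu}(\pi)$, and $\mu(\varphi):=\int_{\mathcal G^0}\varphi\,d\mu$. The single structural fact I would isolate first is that sofic measure entropy takes values in $\{-\infty\}\cup[0,+\infty]$; in particular, whenever $P(\varphi)$ is finite, a measure attaining the maximum must have $h(\mu)\in[0,\infty)$. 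Once this is in hand, most items reduce to elementary manipulations of the affine functionals $\mu\mapsto h(\mu)+\mu(\varphi)$ and of the supremum over $\mathcal M$.

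The routine items I would dispatch directly. For (i), setting $\varphi=\mathbf 0$ collapses $M_\epsilon$ to the cardinality $N_\epsilon$, so $P(\mathbf0)=\sup_{\mu}h(\mu)$, which is the topological entropy of \cite{Bow3}. Item (ii) is immediate from $\mu(\varphi+c)=\mu(\varphi)+c$ for probability measures, and (iv) from $\varphi\le\psi\Rightarrow\mu(\varphi)\le\mu(\psi)$; the sandwich $P(\mathbf0)+\min\varphi\le P(\varphi)\le P(\mathbf0)+\max\varphi$ follows by applying (iv) and (ii) to the constant functions $\min\varphi$ and $\max\varphi$. This sandwich yields (v): if $P(\mathbf0)$ is finite then all values are finite, while $P(\mathbf0)=\pm\infty$ forces $P(\cdot)\equiv\pm\infty$. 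For (vi), choosing $\mu$ optimal for $\varphi$ gives $P(\varphi)-P(\psi)\le\mu(\varphi)-\mu(\psi)\le\|\varphi-\psi\|$, and symmetrically. For (vii) I would expand $h(\mu)=t\,h(\mu)+(1-t)h(\mu)$ and use subadditivity of the maximum over $\mathcal M$.

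The items that genuinely use nonnegativity of entropy are (iii), (viii), and (ix), and this is where the main work lies. For (iii), letting $\mu^\ast$ be optimal for $\varphi+\psi$, adding $P(\varphi)\ge h(\mu^\ast)+\mu^\ast(\varphi)$ and $P(\psi)\ge h(\mu^\ast)+\mu^\ast(\psi)$ gives $P(\varphi)+P(\psi)\ge P(\varphi+\psi)+h(\mu^\ast)$, so subadditivity holds exactly because $h(\mu^\ast)\ge0$ in the finite regime (the degenerate cases are absorbed by (v)). The same mechanism drives (viii): for $c\ge1$ one needs $h(\mu)\le c\,h(\mu)$ and for $c\le1$ the reverse, both equivalent to $h(\mu)\ge0$. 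For (ix), monotonicity gives $P(\varphi)\le P(|\varphi|)$, while $-|\varphi|\le\varphi$ together with (iii) yields $P(\varphi)\ge P(-|\varphi|)\ge P(\mathbf0)-P(|\varphi|)\ge-P(|\varphi|)$, using $P(\mathbf0)=\sup_\mu h(\mu)\ge0$. Finally (x) should follow once I show $\mu(\varphi\circ f)=\mu(\varphi)$ for every admissible $\mu$: this is the $[[\mathcal H]]_{top}$-invariance of Proposition \ref{prop5.1}, namely $\mu(k\circ f)=\mu(k\circ\tau(f))$, so the coboundary $\varphi\circ f-\varphi$ integrates to zero against each $\mu$ and the two maxima coincide term by term. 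The principal obstacle I anticipate is precisely the careful justification that the relevant measure entropies are nonnegative (equivalently, the verification that sofic entropy lies in $\{-\infty\}\cup[0,\infty]$ in this groupoid setting) together with clean bookkeeping of the $\pm\infty$ cases so that (iii), (viii), and (ix) stay meaningful; the verification of $\mu(\varphi\circ f)=\mu(\varphi)$ from the $\tau(f)$-invariance is the secondary point requiring attention.
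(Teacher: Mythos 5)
The paper itself offers no proof of this proposition --- it is stated explicitly ``without proof,'' with the remark that the properties are easier to obtain from the variational principle than from the definitions --- so your plan of deriving everything from Theorem \ref{thm5.1} is exactly the intended route. Your arguments for (i), (ii), (iv), (v), (vi) and (vii) are correct: they use only that $P_\mathbb{P}(\pi,\varphi)=\max_\mu\bigl(h_{\mathbb{P},\mu}(\pi)+\mu(\varphi)\bigr)$ is a maximum of functionals that are affine in $\varphi$, and none of them needs any sign information about the entropy.

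The genuine gap is precisely the ``structural fact'' you flag as your principal obstacle: in this framework it is \emph{false} that $h_{\mathbb{P},\mu}(\pi)\in\{-\infty\}\cup[0,+\infty]$ for all $p\in[1,\infty]$. Your heuristic (a nonempty orbit set gives $N_\epsilon\geq 1$, hence a nonnegative logarithm) is valid pointwise in $\sigma$, but the definition takes $\frac{1}{d_j}\log\|N_\epsilon(\cdot)\|_{p,\mathbb{P}_j}$, an average over $\sigma$; if the $\mathbb{P}_j$-probability that $Orb_\mu(\pi,\sigma,F,K,\delta,\rho)\neq\emptyset$ decays exponentially in $d_j$, this quantity is negative and can stay negative and finite after the infima. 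Random sofic approximations were built into these definitions exactly to capture Bowen's f-invariant, which is negative in examples. Concretely: let $\mathcal{H}=F_r$ ($r\geq 2$) viewed as a one-object discrete groupoid (then $\mathcal{H}^0$ is a point, so \'etale, compactness, regularity of $\nu$ and asymptotic continuity are all automatic), let $\mathbb{P}_j$ come from uniform random homomorphisms $F_r\to S_{d_j}$, take $p=1$, $\beta=+$, and let $\pi:\mathcal{G}\to\mathcal{H}$ be the translation groupoid of the action of $F_r$ on $\mathbb{Z}/n$ in which every generator acts by $x\mapsto x+1$. The unique $\mu$ with $\pi_*\mu=\nu$ and $(\mathcal{G},\mu)$ pmp is the uniform measure, and $h_{\mathbb{P},\mu}(\pi)$ is the f-invariant $(1-r)\log n<0$, so $P_\mathbb{P}(\pi,\varphi)=(1-r)\log n+\mu(\varphi)$ for every $\varphi$. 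One checks directly that then (iii) fails, (viii) fails for $c>1$, and (ix) fails at $\varphi=\mathbf{0}$. So for these three items your argument cannot be repaired in the stated generality --- the items themselves require an extra hypothesis forcing nonnegativity of entropy, e.g.\ $p=\infty$ (where $\|N_\epsilon\|_{\infty,\mathbb{P}_j}\geq 1$ whenever the orbit sets are nonempty on a positive-measure set of $\sigma$) or deterministic approximations $\mathbb{P}_j=\delta_{\sigma_j}$, under which your nonemptiness argument does give $h_{\mathbb{P},\mu}(\pi)\in\{-\infty\}\cup[0,\infty]$ and your proofs of (iii), (viii), (ix) then go through. A smaller point: in (x), the identity $\mu(k\circ f)=\mu(k\circ\tau(f))$ from Proposition \ref{prop5.1} yields $\mu(k\circ f)=\mu(k)$ only when $\tau(f)=\mathcal{H}^0$, i.e.\ for $f\in[\mathcal{H}]_{top}$; for a genuinely partial $f\in[[\mathcal{H}]]_{top}$ the function $\varphi\circ f$ is not even defined on all of $\mathcal{G}^0$, so your cancellation argument (and indeed the statement) should be restricted to the full group $[\mathcal{H}]_{top}$.
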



\noindent {\bf Acknowledgements.} The work was supported by the
National Natural Science Foundation of China (grant No. 11271191)
and National Basic Research Program of China (grant No.
2013CB834100) and the Foundation for Innovative program of Jiangsu
province (grant No. CXZZ12 0380).

\end{document}